\newtheorem{theorem}{Theorem}[section] 
\newtheorem{lemma}[theorem]{Lemma}
\newtheorem{case}{Case}
\newtheorem{subcase}{Subcase}[case]
\newtheorem{claim}{Claim}
\theoremstyle{definition}
\newtheorem{definition}{Definition}
\newtheorem{remark}{Remark}
\def\th@plain{%
  \upshape 
}
\newcommand{\etal}{et~al.\ }
\newcommand{\ie}{i.e.,\ }
\renewenvironment{proof}[1][\proofname]{\par
  \pushQED{\qed}%
  \normalfont \topsep6\p@\@plus6\p@\relax
  \trivlist
  \item[\hskip\labelsep
        \bfseries
    #1\@addpunct{.}]\ignorespaces
}{%
  \popQED\endtrivlist\@endpefalse
}
\crefname{claim}{Claim}{Claims}
\newcommand{\hollownode}[1]{\node[circle, inner sep = 1, fill = white, draw] () at (#1) {}}
\newcommand{\solidnode}[1]{\node[circle, inner sep = 1, fill = black, draw] () at (#1) {}}
\newcommand{\recsolidnode}[1]{\node[rectangle, inner sep = 2, fill = green, draw] () at (#1) {}}
\tikzset{
  on each segment/.style={
    decorate,
    decoration={
      show path construction,
      moveto code={},
      lineto code={
        \path [#1] (\tikzinputsegmentfirst) -- (\tikzinputsegmentlast);
      },
      curveto code={
        \path [#1] (\tikzinputsegmentfirst)
        .. controls (\tikzinputsegmentsupporta) and (\tikzinputsegmentsupportb) ..
        (\tikzinputsegmentlast);
      },
      closepath code={
        \path [#1] (\tikzinputsegmentfirst) -- (\tikzinputsegmentlast);
      },
    },
  },
  mid arrow/.style={postaction={decorate,decoration={
        markings,
        mark=at position .55 with {\arrow[#1]{stealth}}
      }}},
}
\begin{document}

\title{Planar graphs with distance of 3-cycles at least 2 and no cycles of lengths 5, 6, 7}
\author{Tao Wang\footnote{Center for Applied Mathematics, Henan University, Kaifeng, 475004, P. R. China. {\tt Corresponding author: wangtao@henu.edu.cn; https://orcid.org/0000-0001-9732-1617}.} \and Ya-Nan Wang\footnote{School of Mathematics and Statistics, Henan University, Kaifeng, 475004, P. R. China.} \and Xiaojing Yang\footnote{School of Mathematics and Statistics, Henan University, Kaifeng, 475004, P. R. China.}}

\date{June 27, 2024}
\maketitle

\begin{abstract}
Weak degeneracy of a graph is a variation of degeneracy that has a close relationship to many graph coloring parameters. In this article, we prove that planar graphs with distance of $3$-cycles at least 2 and no cycles of lengths $5, 6, 7$ are weakly $2$-degenerate. Furthermore, such graphs can be vertex-partitioned into two subgraphs, one of which has no edges, and the other is a forest.

\textbf{Keywords}: Weakly degenerate; $(\mathcal{I}, \mathcal{F})$-partitionable; Planar graphs
\end{abstract}

\section{Introduction}
\label{sec1}
A graph $G$ is $k$-\emph{degenerate} if the minimum degree of every subgraph is at most $k$. Equivalently, the vertices of $G$ can be linearly ordered $<$ such that each vertex $v$ has at most $k$ neighbors $u$ with $v < u$. Using Euler's formula, we know that every triangle-free planar graph is $3$-degenerate. Regarding to degeneracy of some planar graphs, see \cite{MR1889505,MR1914478,MR4414782}.

Let $\mathbb{Z}$ be the set of integers, and $\mathbb{Z}^{G}$ be the set of all mappings from $V(G)$ to $\mathbb{Z}$. For a subset $U$ of $V(G)$, we let $f_{-U}: V(G)-U\longrightarrow \mathbb{Z}$ be a mapping defined as $f_{-U}(x)=f(x)-|N_{G}(x)\cap U|$ for all $x \in V(G)-U$. In particular, we write $f_{-v}$ for $f_{-\{v\}}$. A graph $G$ is \emph{strictly $f$-degenerate} if every subgraph $R$ of $G$ contains a vertex $v$ with $d_{R}(v) < f(v)$. According to definition, $G$ is strictly $1$-degenerate if and only if it has no edges, which is an independent set; $G$ is strictly $2$-degenerate if and only if it is a forest.

Motivated by the study of greedy algorithms for graph coloring, Bernshteyn and Lee \cite{MR4606413} introduced a weaker version of ordinary degenerate, named weakly degenerate. 

\begin{definition}
Given a graph $G$ and $f \in \mathbb{Z}^{G}$. For a vertex $u \in V(G)$, the operation $\textsf{Delete}_{u}(G, f)$ outputs the graph $G_{-u} = G - u$ and the function $f_{-u}$. 
If $f(u) \geq 0$ and $f_{-u} \geq 0$, we say the operation $\textsf{Delete}_{u}$ is \emph{legal}. 
\end{definition}

\begin{definition}
Given a graph $G$ and $f \in\mathbb{Z}^{G}$. For two adjacent vertices $a$ and $b$ in $G$, the operation $\textsf{DeleteSave}_{(a, b)}$ on $(G, f)$ outputs the graph $G_{-a} = G - a$ and the function $f'$ defined as
\begin{align*}
f'(v) \coloneqq
&\begin{cases}
f(v) - 1, & \text{if $v$ is a neighbor of $a$ and $v \neq b$};\\[0.3cm]
f(v), & \text{otherwise}.
\end{cases}
\end{align*}
If $f(a) > f(b)$ and $f' \geq 0$, we say the operation $\textsf{DeleteSave}_{(a, b)}$ is \emph{legal} on $(G, f)$. 
\end{definition}

A graph $G$ is \emph{weakly $f$-degenerate} if it can be reduced to a null graph through a sequence of legally operations \textsf{Delete} and \textsf{DeleteSave}. For a positive integer $k$, we define $G$ as \emph{weakly $k$-degenerate} if it is weakly $f$-degenerate, where $f$ is a constant function with a value of $k$. The \emph{degeneracy} $\textsf{d}(G)$ of $G$ is the minimum integer $k$ such that $G$ is $k$-degenerate. The \emph{weak degeneracy} $\textsf{wd}(G)$ of $G$ is the minimum integer $k$ for which $G$ is weakly $k$-degenerate. Bernshteyn and Lee \cite{MR4606413} established a chain of inequalities:
\begin{equation}\label{eq1_v1}
\chi(G) \leq \chi_{\ell}(G) \leq \chi_{\textsf{DP}}(G) \leq \textsf{wd}(G) + 1 \leq \textsf{d}(G) + 1, 
\end{equation}
where $\chi_{\textsf{DP}}(G)$ is the DP-chromatic number of $G$.

According to \cref{eq1_v1}, various graph coloring parameters are upper bounded by $\textsf{wd}(G) + 1$. Consequently, there is significant interest in finding various sufficient conditions for planar graphs to be weakly $k$-degenerate. Wang \cite{MR4564473} provided two such conditions for a plane graph to be weakly $2$-degenerate. A best possible condition for a toroidal graph to be weakly $3$-degenerate was given in \cite{MR4746933}.

Han \etal \cite{MR4663366} proved that planar graphs of girth 5 are weakly $2$-degenerate. 
\begin{theorem}[Han et al. \cite{MR4663366}]
If $G$ is a triangle-free planar graph in which no 4-cycle is normally adjacent to a cycle of length at most five, then $\textsf{wd}(G) \leq 2$. 
\end{theorem}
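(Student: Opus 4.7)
The plan is to apply the standard discharging method to a minimum counterexample, adapted to the $\textsf{Delete}$/$\textsf{DeleteSave}$ framework. To make the induction work smoothly, I would first strengthen the statement to allow non-constant $f$: prove that every graph $G$ in the class is weakly $f$-degenerate for every $f \colon V(G) \to \mathbb{Z}$ with $f(v) \geq 2$ everywhere (or a slightly relaxed bound that is stable under the reduced functions $f_{-U}$ arising after a $\textsf{Delete}_v$). Then take a counterexample $(G, f)$ minimizing $|V(G)| + |E(G)|$, fix a planar embedding, and assume (by peeling off cut-vertices) that $G$ is 2-connected so every face is bounded by a cycle of length $\geq 4$.

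Next I would collect a short list of reducible configurations. The immediate one is $\delta(G) \geq 3$: a vertex of degree $\leq 2$ admits a legal $\textsf{Delete}$, contradicting minimality. The harder reductions use $\textsf{DeleteSave}$ to exclude 3-vertices that sit in short local configurations; the strict inequality $f(a) > f(b)$ in the definition of $\textsf{DeleteSave}_{(a,b)}$ is available precisely because we permit non-uniform $f$ in the inductive hypothesis, so an earlier $\textsf{Delete}$ at a neighbor of $b$ has typically raised $f(a)$ relative to $f(b)$. The upshot should be that a 3-vertex either lies on three faces of length $\geq 5$, or lies on a 4-face whose surrounding structure is tightly constrained.

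Then I would run discharging with $\mu(v) = d_G(v) - 4$ and $\mu(f) = \ell(f) - 4$, so $\sum \mu = -8$ by Euler's formula. Because $G$ is triangle-free, every face has non-negative initial charge; only 3-vertices are negative, with deficit $1$ each. The hypothesis that no 4-cycle is normally adjacent to a cycle of length $\leq 5$ is what makes the redistribution possible: it forces every 4-face to be entirely surrounded by faces of length $\geq 6$, so each 4-face can collect enough charge from its big neighbors to subsidize its incident 3-vertices, while 5$^+$-faces directly send a fixed fractional charge (say $\tfrac{1}{3}$) to each incident 3-vertex. A case analysis on the face-types and degree-sequences incident to each element verifies that the final charge is non-negative, producing the desired contradiction $-8 \geq 0$.

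The main obstacle I expect is engineering the $\textsf{DeleteSave}$-based reductions so that the family of "bad" configurations around 4-faces and 3-vertices is small enough to be eliminated by the discharging rules, yet broad enough that the normal-adjacency hypothesis is genuinely needed. In particular, a 3-vertex lying on a 4-face is the delicate case: without the hypothesis, a 4-face adjacent to another short face could share such a 3-vertex, and the reducibility claim would have to rule this out via a chain of $\textsf{DeleteSave}$ steps along the short-face boundary — making the verification of legality at each step, and of non-negativity of $f'$ at the end, the technical heart of the proof.
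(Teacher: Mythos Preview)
The paper does not prove this theorem; it is quoted as a known result of Han, Wang, Wu, Zhou and Zhu and then set aside, while the paper proves its own results (\cref{STR}, \cref{Weak}, \cref{IF:partition}). So there is no proof here to compare your proposal against.

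That said, your outline has the right overall shape but is vague at exactly the point where the work lies, and two of your framing choices are off. First, ``strengthen to weakly $f$-degenerate for every $f \geq 2$'' is vacuous: that statement is immediately equivalent to weak $2$-degeneracy, so it buys you no extra inductive leverage. The clean device, used in this paper for its own theorems, is the Bernshteyn--Lee criterion (\cref{wd:minimal}): in a minimal non-weakly-$2$-degenerate graph, $\delta \geq 3$ and every block of the subgraph induced by the $3$-vertices is a cycle or a complete graph. This is what actually lets you declare specific subgraphs reducible. Second, your ``peel off cut-vertices to assume $2$-connected'' step is not obviously legitimate for weak degeneracy: gluing two weakly $2$-degenerate graphs at a cut vertex $v$ need not yield a weakly $2$-degenerate graph, because the legality of the operation removing $v$ in one piece depends on $v$'s full degree in $G$, not its degree in the piece.

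The discharging scheme $\mu(v) = d(v) - 4$, $\mu(f) = \ell(f) - 4$ is reasonable for triangle-free planar graphs, and the hypothesis does force every $4$-face to be surrounded by $6^{+}$-faces. But the decisive content of such a proof is the explicit list of reducible configurations together with, for each one, the vertex ordering that witnesses a legal \textsf{Delete}/\textsf{DeleteSave} sequence (compare how the paper handles \cref{UC} in \cref{sec:3}). You name none of these configurations, and ``a $3$-vertex on a $4$-face is delicate'' is not yet a proof step.
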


Liu \etal \cite{MR3886261} proved that planar graphs with triangles at distance greater than two and no $5$-, $6$-, $7$-cycles are DP-$3$-colorable. This paper strengthens this result. Let $\mathrm{dist}^{\Delta}$ be the smallest distance between triangles, \ie $\mathrm{dist}^{\Delta} = \min\{d_{G}(v_{1}, v_{2}): \text{$v_{i} \in V(T_{i})$ and $T_{i}$'s are triangles of $G$}\}$.

\begin{theorem}\label{Weak}
Every planar graph with $\mathrm{dist}^{\Delta} \geq 2$ and without $5$-, $6$-, $7$-cycles is weakly $2$-degenerate. 
\end{theorem}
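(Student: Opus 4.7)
My plan is to argue by minimum counterexample, combined with a reducibility analysis of local configurations and a discharging step, in the spirit of~\cite{MR3886261,MR4663366}. Let $G$ be a planar graph satisfying the hypotheses of Theorem~\ref{Weak} such that $(G, 2)$ is not weakly $2$-degenerate, chosen to minimize $|V(G)|$. Fix a plane embedding. The forbidden cycles together with $\mathrm{dist}^{\Delta} \geq 2$ imply that every face of $G$ has length $3$, $4$, or at least $8$, that distinct $3$-faces are vertex-disjoint, and that no $3$-face shares an edge with a $4$-face (such sharing would produce a $5$-cycle). Consequently every edge of a $3$-face is shared with a face of length at least $8$; this face structure is what the discharging will ultimately exploit.

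The heart of the argument is ruling out a list of local configurations in $G$. Since each legal operation modifies the function, the induction on $|V(G)|$ really proves a slightly stronger statement: for every $f \colon V(G) \to \{0,1,2\}$ that can arise from a partial sequence of legal operations started from the constant $2$, the remaining pair is weakly $f$-degenerate. With this invariant in mind, I would chain the operations \textsf{Delete} and \textsf{DeleteSave} to show that a minimum counterexample contains no vertex $v$ with $d(v) \leq 2$, no $3$-vertex incident to a $3$-face whose other two vertices both have small degree, and more generally no short cluster of adjacent $3$-vertices around a $3$-face. For each configuration I would pick a first \textsf{Delete} target so that the resulting $f$-values create a strict surplus $f(a) > f(b)$ on some neighboring edge, allowing a sequence of \textsf{DeleteSave} operations to peel the configuration off and leave a proper subgraph in the class, to which minimality applies.

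For the discharging step, assign the initial charge $\mu(v) = d(v)-4$ and $\mu(F) = \ell(F)-4$; by Euler's formula $\sum_v \mu(v)+\sum_F \mu(F) = -8$. The only negative charges are carried by $3$-vertices and $3$-faces, both equal to $-1$. I would transfer charge from faces of length at least $8$ and from vertices of degree at least $5$ to these deficient entities, typically sending $\tfrac{1}{3}$ across each incidence with a $3$-face and a suitable smaller amount to each nearby $3$-vertex. Because every $3$-face is entirely surrounded by $\geq 8$-faces, and because the structural restrictions derived in the previous step prevent an $\geq 8$-face from being overloaded with low-degree vertices on its boundary, the final charge at every vertex and face turns out to be non-negative, contradicting the total $-8$.

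The main obstacle I anticipate is the reducibility of the more delicate configurations involving two or three $3$-vertices close to a $3$-face. The legality of \textsf{DeleteSave} requires the strict inequality $f(a)>f(b)$ to be \emph{created} by earlier \textsf{Delete} steps and then \emph{preserved} through subsequent operations, so choosing the correct initial deletion and ordering the remaining operations so that every intermediate $f$-value stays non-negative is the combinatorial core of the proof; once these configurations are handled, the face-structure constraints from the first paragraph make the discharging essentially tight.
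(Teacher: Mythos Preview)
Your general framework---minimum counterexample, reducible configurations, discharging---matches the paper's, but the proposal has a factual slip and a substantial underestimate of what the reducibility step requires.

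First, the face-length claim is wrong: the absence of $7$-cycles does \emph{not} kill $7$-faces. A $7$-face whose boundary walk is a triangle and a $4$-cycle sharing a cut vertex is perfectly compatible with the hypotheses (indeed the paper treats exactly this case in \cref{str}(3) and \cref{4-7-face}). Your structural paragraph therefore needs an extra case, and the discharging rules must account for it.

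Second, and more seriously, the reducible configurations are far more intricate than ``a $3$-vertex incident to a $3$-face whose other two vertices have small degree'' or ``a short cluster of $3$-vertices around a $3$-face''. The paper's structural theorem (\cref{STR}) produces ten configurations, the simplest of which is already an entire $8$-cycle of $3$-vertices with a pendant light path (\cref{Ca}); the hardest (\cref{C-special}, \cref{C-f3f4a}, \cref{C-f3f4b}) span \emph{two} adjacent $8$-faces with prescribed $3$- and $4$-face attachments and specified $4$-vertices, and their reducibility needs carefully chosen vertex orderings of length up to~$19$ with one or two \textsf{DeleteSave} steps inserted at the right places. Your sketch of ``pick a first \textsf{Delete} target to create a surplus, then peel off'' is correct in spirit but gives no indication that you have identified which configurations are actually unavoidable, and the discharging cannot be completed without that list: the paper's argument hinges on a delicate notion of \emph{special $8$-face} (\cref{special}) and a rule (\ref{R3}) that routes charge from a semi-rich $4$-vertex differently depending on whether an adjacent $8$-face is special.

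Finally, your charge assignment $\mu(v)=d(v)-4$, $\mu(F)=\ell(F)-4$ is a legitimate alternative to the paper's $2d(v)-6$, $d(f)-6$, but it shifts the deficit from small faces to $3$-vertices, and you have not shown that an $8$-face with eight $3$-vertices and two adjacent $3$-faces can afford the required transfers under your scheme. Given how tight the paper's analysis already is (several subcases end with final charge exactly~$0$), this would need to be checked rather than asserted.
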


We first prove the following structural result, which can imply \cref{Weak}. 
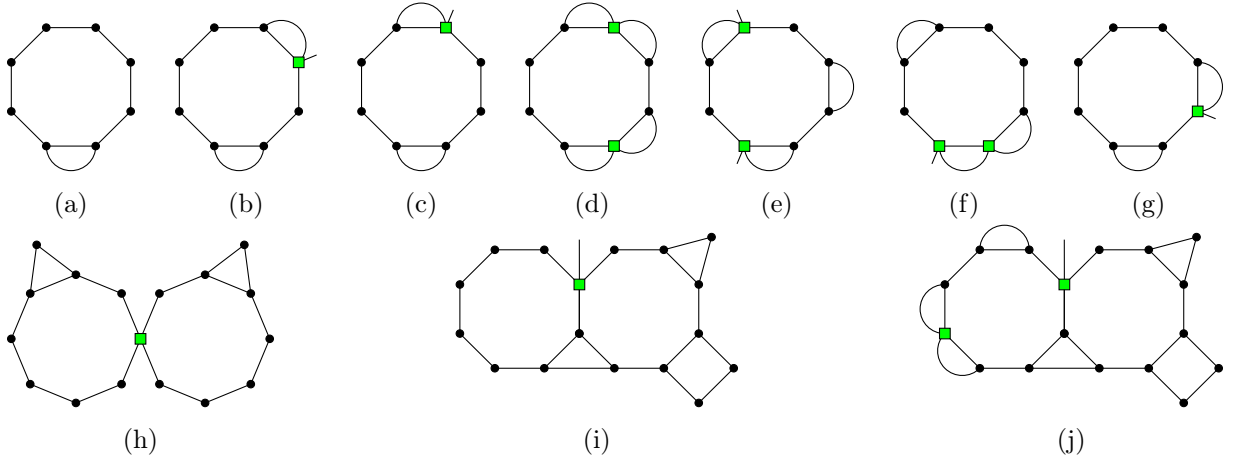
\begin{figure}
\def\s{1}
\def\8cycle{
\foreach \i in {1, 2, ..., 8}
{
\def\pointname{v\i}
\coordinate (\pointname) at ($(\i*45+22.5:\s)$);
}
\draw (v1)--(v2)--(v3)--(v4)--(v5)--(v6)--(v7)--(v8)--cycle;
\foreach \i in {1, 2, ..., 8}{\solidnode{v\i};}
}
\subcaptionbox{\label{Ca}}{\begin{tikzpicture}[scale=0.85]
\8cycle
\draw (v5) arc (180:360:0.383);
\end{tikzpicture}}
\subcaptionbox{\label{Cb}}{\begin{tikzpicture}[scale=0.85]
\8cycle
\draw (v5) arc (180:360:0.383);
\draw (v8) arc (-45:135:0.383);
\draw (v8) -- ($1.3*(v8)$);
\recsolidnode{v8};
\end{tikzpicture}}
\subcaptionbox{\label{Cc}}{\begin{tikzpicture}[scale=0.85]
\8cycle
\draw (v1) arc (0:180:0.383);
\draw (v5) arc (180:360:0.383);
\draw (v1) -- ($1.3*(v1)$);
\recsolidnode{v1};
\end{tikzpicture}}
\subcaptionbox{\label{Cd}}{\begin{tikzpicture}[scale=0.85]
\8cycle
\draw (v8) arc (-45:135:0.383);
\draw (v1) arc (0:180:0.383);
\draw (v5) arc (180:360:0.383);
\draw (v6) arc (-135:45:0.383);
\recsolidnode{v1};
\recsolidnode{v6};
\end{tikzpicture}}
\subcaptionbox{\label{Ce}}{\begin{tikzpicture}[scale=0.85]
\8cycle
\draw (v2) -- ($1.3*(v2)$);
\draw (v2) arc (45:225:0.383);
\draw (v5) arc (180:360:0.383);
\draw (v5) -- ($1.3*(v5)$);
\draw (v7) arc (-90:90:0.383);
\recsolidnode{v2};
\recsolidnode{v5};
\end{tikzpicture}}
\subcaptionbox{\label{Cf}}{\begin{tikzpicture}[scale=0.85]
\8cycle
\draw (v2) arc (45:225:0.383);
\draw (v5) -- ($1.3*(v5)$);
\draw (v5) arc (180:360:0.383);
\draw (v6) arc (-135:45:0.383);
\recsolidnode{v5};
\recsolidnode{v6};
\end{tikzpicture}}
\subcaptionbox{\label{Cg}}{\begin{tikzpicture}[scale=0.85]
\8cycle
\draw (v5) arc (180:360:0.383);
\draw (v7) arc (-90:90:0.383);
\draw (v7) -- ($1.3*(v7)$);
\recsolidnode{v7};
\end{tikzpicture}}
\subcaptionbox{\label{C-special}}{\begin{tikzpicture}[scale=0.85]
\begin{scope}[shift={(-1,0)},rotate=22.5]
\8cycle
\coordinate (H) at ($(v2)!1!60:(v1)$);
\draw (v1) -- (H) -- (v2);
\solidnode{H};
\end{scope}
\begin{scope}[shift={(1,0)},rotate=22.5]
\8cycle
\coordinate (H) at ($(v1)!1!60:(v8)$);
\draw (v8) -- (H) -- (v1);
\solidnode{H};
\end{scope}
\recsolidnode{v3};
\end{tikzpicture}}\hfill
\subcaptionbox{\label{C-f3f4a}}{\begin{tikzpicture}[scale=0.85]
\begin{scope}[shift={(-0.925,0)}]
\foreach \i in {1, 2, ..., 8}
{
\def\pointname{u\i}
\coordinate (\pointname) at ($(\i*45+22.5:\s)$);
}
\draw (u1)--(u2)--(u3)--(u4)--(u5)--(u6)--(u7)--(u8)--cycle;
\foreach \i in {1, 2, ..., 8}{\solidnode{u\i};}
\coordinate (H) at ($(u2)!1!60:(u1)$);
\end{scope}
\begin{scope}[shift={(0.925,0)}]
\8cycle
\coordinate (H) at ($(v1)!1!60:(v8)$);
\draw (v8) -- (H) -- (v1);
\solidnode{H};
\coordinate (H1) at ($(v7)!1!90:(v6)$);
\coordinate (H2) at ($(v6)!1!-90:(v7)$);
\draw (v6) -- (H2) -- (H1) -- (v7);
\solidnode{H1};
\solidnode{H2};
\end{scope}
\draw (u6) -- (v5);
\draw (v3) -- ($(u1)-(u8)+(v2)-(v3)$);
\recsolidnode{v3};
\end{tikzpicture}}\hfill
\subcaptionbox{\label{C-f3f4b}}{\begin{tikzpicture}[scale=0.85]
\begin{scope}[shift={(-0.925,0)}]
\foreach \i in {1, 2, ..., 8}
{
\def\pointname{u\i}
\coordinate (\pointname) at ($(\i*45+22.5:\s)$);
}
\draw (u1)--(u2)--(u3)--(u4)--(u5)--(u6)--(u7)--(u8)--cycle;
\foreach \i in {1, 2, ..., 8}{\solidnode{u\i};}
\coordinate (H) at ($(u2)!1!60:(u1)$);
\end{scope}
\begin{scope}[shift={(0.925,0)}]
\8cycle
\coordinate (H) at ($(v1)!1!60:(v8)$);
\draw (v8) -- (H) -- (v1);
\solidnode{H};
\coordinate (H1) at ($(v7)!1!90:(v6)$);
\coordinate (H2) at ($(v6)!1!-90:(v7)$);
\draw (v6) -- (H2) -- (H1) -- (v7);
\solidnode{H1};
\solidnode{H2};
\end{scope}
\draw (u6) -- (v5);
\draw (v3) -- ($(u1)-(u8)+(v2)-(v3)$);
\draw (u1) arc (0:180:0.383);
\draw (u3) arc (90:270:0.383);
\draw (u4) arc (135:315:0.383);
\recsolidnode{v3};
\recsolidnode{u4};
\end{tikzpicture}}
\caption{A set of unavoidable configurations, where a semicircle represents a path of length two or three with interior vertices having degree three in $G$, a solid point represents a vertex of degree three, and a green square point represents a vertex of degree four in $G$.}
\label{UC}
\end{figure}

\begin{theorem}\label{STR}
Let $G$ be a plane graph with $\mathrm{dist}^{\Delta} \geq 2$ and without cycles of length $5, 6, 7$. Then one of the following two conclusions holds: 
\begin{enumerate}
\item there is a vertex of degree at most two. 
\item there exists a degree-restricted subgraph isomorphic to a configuration as depicted in \cref{UC}.
\end{enumerate}
\end{theorem}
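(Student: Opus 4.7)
The plan is to argue by contradiction via discharging. Suppose $G$ is a plane graph satisfying the hypotheses, has minimum degree at least $3$, and contains no degree-restricted subgraph isomorphic to any configuration in \cref{UC}. Since no cycles of length $5$, $6$, or $7$ occur, every face of $G$ has length $3$, $4$, or at least $8$. The condition $\mathrm{dist}^{\Delta} \geq 2$ implies that any two $3$-faces are vertex-disjoint and no vertex of one triangle is adjacent to a vertex of another; combining this with the absence of short cycles, each $3$-face is bounded by three faces of length at least $8$, and no $4$-face is adjacent to a $3$-face. These structural constraints will be used throughout.

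Assign the initial charge $\mu(x) \coloneqq d(x) - 4$ to each $x \in V(G) \cup F(G)$. By Euler's formula together with the handshaking identities, the total charge equals $-8$. Next, I would design discharging rules redistributing charge from "rich" objects ($5^{+}$-vertices and long faces) to "poor" ones ($3$-vertices, $3$-faces, and $4$-vertices where needed). Typical choices would be: every $8^{+}$-face sends a fixed amount, say $\tfrac{1}{2}$, to each incident $3$-vertex and a smaller amount to each incident $4$-vertex; every $8^{+}$-face sharing an edge with a $3$-face donates $\tfrac{1}{3}$ across that edge, so that the $3$-face recovers $+1$ overall from its three $8^{+}$-neighbours; and the rate for $3$-vertices situated on the "semicircle"-paths of \cref{UC} is further refined. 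The constants are tuned so that $3$-vertices and $3$-faces reach charge $0$, while $5^{+}$-vertices and $4^{+}$-faces remain non-negative under generic boundary patterns.

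The heart of the proof, and the main obstacle, is verifying that every $8^{+}$-face retains non-negative final charge unless it embeds one of the configurations of \cref{UC}. An $8$-face $f$ begins with charge $4$; if its boundary carries $t$ degree-$3$ vertices, $s$ degree-$4$ vertices, and $k$ adjacent $3$-faces (attached across common edges), then the final charge takes the form $4 - \tfrac{1}{2}t - \gamma s - \tfrac{1}{3}k$, possibly compensated by savings from neighbouring long faces. Showing that this quantity is non-negative except when the boundary realises one of (a)--(g)---exactly the extremal patterns where the count of $3$-vertices, $4$-vertices, and nearby triangles saturates the inequality---requires an exhaustive but finite case split driven by how the $8$-cycle partitions into vertices of each degree and by where the at most two adjacent $3$-faces can sit. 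Configurations (h)--(j) capture the remaining delicate situation in which two $8$-faces, each already carrying an attached $3$-face, meet along a short common path; here neither $8$-face is individually deficient, but the pair is, so a joint local analysis of the two faces is needed.

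Once every face and every vertex is verified to end with non-negative charge, the global total is non-negative, contradicting the initial value $-8$ and completing the proof. The delicate part will be the bookkeeping in the joint analysis for (h)--(j) and in ruling out that several "semicircle" paths can accumulate on the same $8$-face without triggering (a)--(g); the former is where I expect most of the work to go.
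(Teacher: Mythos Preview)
Your opening structural claim is false and would derail the discharging. From the absence of $5$-, $6$-, $7$-cycles you conclude that every face has length $3$, $4$, or at least $8$; but a face of length $k$ need not be bounded by a $k$-cycle. In this class of graphs there \emph{are} $7$-faces: the boundary walk can be a triangle and a $4$-cycle meeting at a single vertex (see the paper's \cref{4-7-face}). Likewise an $8$-face can have boundary consisting of two $4$-cycles sharing a vertex (\cref{4-8-face}). The paper proves carefully in \cref{str} that there are no $6$-faces, that a $7$-face adjacent to a $4^{-}$-face must look like \cref{4-7-face}, and that a $3$-face is adjacent only to $8^{+}$-faces while a $4$-face is adjacent only to $7^{+}$-faces. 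You need all of these facts, not the blanket ``$\geq 8$'' assertion, and your rule ``every $8^{+}$-face sharing an edge with a $3$-face donates $\tfrac13$'' leaves $4$-faces with no mechanism for recovering charge.

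Beyond that error, your scheme diverges substantially from the paper's. The paper uses the charges $\mu(v)=2d(v)-6$ and $\mu(f)=d(f)-6$ (so $3$-vertices are neutral and faces, not vertices, carry the deficit), and its rules push charge from $4^{+}$-vertices and $7^{+}$-faces to $4^{-}$-faces, with rates finely tuned to the degree pattern on the small face. The hard part---the $8$-face analysis---is organised around \emph{$4^{-}$-controlling paths} and \emph{rich/semi-rich/poor} vertices, with sharp bounds (\cref{Claim1,Claim2,Claim3,Claim4}) on what an $8$-face can send along each such path. Configurations (h)--(j) do not arise from a ``joint deficiency'' of two $8$-faces as you suggest; rather, the paper defines \emph{special} $8$-faces, adds a rule (\ref{R3}a) forcing a $4$-vertex to send extra charge to an adjacent special $8$-face, and then \cref{lem:N-special8faces} shows that a $4$-vertex with two special $8$-faces at opposite positions forces \cref{C-special}, while the remaining tight cases in the rich-$4$-vertex analysis force \cref{C-f3f4a} or \cref{C-f3f4b}. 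Your $d(x)-4$ scheme could in principle be made to work, but the case analysis you outline (counting $3$-vertices, $4$-vertices, and adjacent triangles on an $8$-face) is too coarse to isolate exactly the configurations in \cref{UC}; the paper's controlling-path bookkeeping is what makes the extremal patterns emerge cleanly.
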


Actually, \cref{STR} can also be used to prove a result on a special strictly $f$-degenerate transversal, which can imply an $(\mathcal{I}, \mathcal{F})$-coloring of $G$. 

A graph $G$ is \emph{$(\mathcal{I}, \mathcal{F})$-partitionable} if its vertices set can be divided into two parts, one part is a forest and the other has no edges. Borodin and Glebov \cite{MR1918259} confirmed that every planar graph with girth at least $5$ is \emph{$(\mathcal{I}, \mathcal{F})$}-partitionable. Liu \etal \cite{MR3886261} proved that planar graphs without $4$-, $6$- and $8$-cycles are $(\mathcal{I}, \mathcal{F})$-partitionable, and Kang \etal \cite{arXiv:2303.04648} proved that planar graphs without $4$-, $6$- and $9$-cycles are $(\mathcal{I}, \mathcal{F})$-partitionable. In this paper, we are interested in the $(\mathcal{I}, \mathcal{F})$-partition of planar graphs without $5$-, $6$- and $7$-cycles. 

\begin{theorem}\label{IF:partition}
Every planar graph with $\mathrm{dist}^{\Delta} \geq 2$ and without cycles of length $5, 6, 7$ is $(\mathcal{I}, \mathcal{F})$-partitionable.
\end{theorem}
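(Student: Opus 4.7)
\textbf{Proof proposal for \cref{IF:partition}.} The plan is to argue by contradiction, in parallel with the proof of \cref{Weak}, using \cref{STR} as the sole structural input. Let $G$ be a vertex-minimum counterexample to \cref{IF:partition}. Every proper subgraph of $G$ inherits the hypotheses $\mathrm{dist}^{\Delta}\geq 2$ and the absence of $5$-, $6$-, $7$-cycles, so by minimality every proper subgraph admits an $(\mathcal{I},\mathcal{F})$-partition. It is convenient to recast the $(\mathcal{I},\mathcal{F})$-condition as a strictly $f$-degenerate transversal of the cover graph on $V(G)\times\{1,2\}$ whose edges are $\{(u,i)(v,i): uv\in E(G),\ i\in\{1,2\}\}$, with $f(v,1)=1$ and $f(v,2)=2$: picking $(v,1)$ puts $v$ into the independent part and $(v,2)$ into the forest part. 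In this formulation, the operations \textsf{Delete} and \textsf{DeleteSave} used in the proof of \cref{Weak} have direct analogues, so the same configuration analysis supporting \cref{Weak} can be adapted here.

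By \cref{STR}, $G$ either contains a vertex of degree at most $2$ or contains one of the configurations of \cref{UC}. The first case is easy: if $d_G(v)\leq 2$, apply induction to $G-v$, obtain a partition $(I',F')$, and extend it to $v$. A short case check shows $v$ always has an admissible color---placing $v$ in $F'$ could close a cycle only if both neighbors of $v$ lie in the same tree of $F'$, but in that situation neither neighbor is in $I'$, so $v$ may instead join $I'$. This contradicts the choice of $G$.

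The bulk of the argument is to rule out each of the ten configurations (a)--(j) of \cref{UC} from a minimum counterexample. For each I would choose a reducible core $U$ consisting of the degree-$3$ vertices along the semicircular paths together with the marked degree-$3$ and degree-$4$ vertices, apply induction to $G-U$, and then extend the resulting partition $(I'',F'')$ onto $U$ through a sequence of legal cover-graph reductions. Configurations (a)--(g), which are single $8$-cycles with one or two chords, admit a direct greedy extension: processed in a suitable order, each vertex of $U$ has at most one committed neighbor at the moment of coloring, so at least one admissible color remains.

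The main obstacle will be the three ``twin-$8$-cycle'' configurations (h), (i), (j), in which two $8$-cycles are linked by a single edge and enriched by pendant $3$-paths and degree-$4$ hubs. Here a naive greedy extension can fail, because the forest part $F''$ may close a cycle passing through the linking edge once too many vertices of $U$ are assigned to $F$. To circumvent this, I plan to imitate the \textsf{DeleteSave} move and commit a strategically chosen vertex to $F$ \emph{before} processing its neighbors, so that the constraint at the potential cycle-closing spot is relaxed. A finite case analysis on the partition values of the few boundary vertices that $U$ shares with $G-U$ should reduce each configuration to a handful of resolvable subcases. Carrying out this verification uniformly across (h)--(j) is expected to be the technically heaviest step, and it mirrors the argument by which \cref{Weak} is derived from \cref{STR}.
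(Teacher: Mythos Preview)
Your overall plan—reformulate as a strictly $f$-degenerate transversal with $f(v,1)=1$, $f(v,2)=2$, invoke \cref{STR}, and reduce each configuration—is precisely the paper's route (packaged there as the general statement \cref{ww}, proved via \cref{SFDT:minimal} and \cref{WWY}, and then specialized). But your treatment of (a)--(g) has a real gap. First, these are not $8$-cycles with chords: each semicircle is a pendant path of length $2$ or $3$ whose interior vertices belong to $U$. Second, and more seriously, ``at most one committed neighbor'' is false. A generic cycle vertex $x_i$ already has one neighbor in $G-U$ plus one processed cycle neighbor, giving two; and whichever vertex you place last has all three of its neighbors committed, so pure greedy can genuinely fail there (one neighbor in $\mathcal{I}$ and two in $\mathcal{F}$ blocks both parts).

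The cure is exactly the \textsf{DeleteSave} idea you reserved for (h)--(j). In the paper's ordering $x_2,x_3,\dots,x_8,x_1,(u_1),v_1$ the initial vertex $x_2$ has all three neighbors inside $U$, and its surplus freedom is spent to protect the terminal vertex $v_1$; this is \cref{WWY} with the pair $(x_2,v_1)$, and the paper applies it uniformly to (b)--(j). Configuration (a) is dispatched instead by \cref{SFDT:minimal}: the induced subgraph on its vertices is $2$-connected, is neither a cycle nor a clique, and contains a vertex of degree $3>\max_q f(v,q)=2$. Once you use this machinery throughout, the ``finite case analysis on boundary partition values'' you anticipated for (h)--(j) is unnecessary as well—specifying the ordering and invoking \cref{WWY} suffices.
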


To end this section, we give some notations. An $l$-vertex, or an $l^{+}$-vertex, or an $l^{-}$-vertex means a vertex with degree $l$, or at least $l$, or at most $l$. The notions of an $l$-face, an $l^{+}$-face and an $l^{-}$-face are defined similarly. For a face $f\in F$, if the vertices on $f$ are cyclically listed as $u_{1}, u_{2}, u_{3}, \dots, u_{k}$, then we say that $f$ is a $(d(u_{1}), d(u_{2}),\dots, d(u_{k}))$-face. A face is \emph{light} if the vertices on this face are all $3$-vertices. If $ab$ is a common edge of a $4^{-}$-face and a $7^{+}$-face, then we say $ab$ \emph{controls} the smaller $4^{-}$-face. For a given face $g$, we say a facial $(d_{1}, d_{2},\dots, d_{t})$-walk on $g$ is a \emph{$d$- or $d^{-}$-controlling walk} if each edge on the walk controls a face other than $g$ with size $d$ or at most $d$, respectively. If all vertices on the walk are distinct, we say the walk is a \emph{$d$- or $d^{-}$-controlling path}. Assume $abc$ is a facial path on a $7^{+}$-face $f$, we say $b$ is \emph{rich} with respect to $f$ if neither $ab$ nor $bc$ controls a $4^{-}$-face, \emph{semi-rich} with respect to $f$ if exactly one of $ab$ and $bc$ controls a $4^{-}$-face, and \emph{poor} with respect to $f$ if each of $ab$ and $bc$ controls a $4^{-}$-face. We say a facial path is \emph{maximal} if every edge on $f$ controls a $4^{-}$-face, but the edges on the path immediately after and before it control $5^{+}$-faces. Let $h$ be a face with $d(h) \geq 7$, and let $s_{0}(h)$ denote the number of vertices on $h$ that are not on any $4^{-}$-controlling path.

\section{Proof of \cref{STR}}
\label{sec:2}
Assume that $G$ is a counterexample to \cref{STR}. Then the minimum degree of $G$ is at least three, and there exists no degree-restricted subgraph isomorphic to a configuration as depicted in \cref{UC}.

\begin{lemma}\label{str}
Assuming $G$ is a plane graph with distance of triangles at least 2 and no cycles of lengths 5, 6, 7. Then the following structural results hold:
\begin{enumerate}[label = (\arabic*)]
\item\label{Lem1:i} A $3$-cycle and a $4^{-}$-cycle have no edges in common. 
\item\label{Lem1:iii} There are no $6$-faces. 
\item A $7$-face adjacent to a $4^{-}$-face should be as depicted in \cref{4-7-face}.
\item\label{Lem1:v} A $3$-face is not adjacent to any $7^{-}$-face.
\item\label{Lem1:vi} A $4$-face is not adjacent to any $6^{-}$-face. 
\item If an $8$-face has a cut vertex incident to two $4^{-}$-faces, then it should be as depicted in \cref{4-8-face}.
\end{enumerate}
\end{lemma}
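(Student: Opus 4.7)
The plan is to prove each of the six parts by contradiction, using the two standing hypotheses ($\mathrm{dist}^{\Delta} \geq 2$ and no cycles of length $5$, $6$, or $7$) and invoking earlier parts as the argument proceeds. In every subcase we extract either a forbidden short cycle or a pair of triangles with distance less than $2$.

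For \ref{Lem1:i}, if a triangle $abc$ and a $4$-cycle $abde$ share the edge $ab$, then $c$-$a$-$e$-$d$-$b$-$c$ is a $5$-cycle; two triangles sharing an edge also share two vertices and so have mutual distance $0$. For \ref{Lem1:iii}, a $6$-face with simple boundary is a $6$-cycle directly. If its boundary is a non-simple closed walk of length $6$, splitting at a repeated vertex gives two closed sub-walks of lengths $\geq 3$ summing to $6$, hence both of length exactly $3$, yielding two triangles that share a vertex. The bridge sub-case forces side-walks to sum to only $4$, which is incompatible with minimum degree at least $3$ (which rules out pendant structures).

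For items \ref{Lem1:v} and \ref{Lem1:vi}, I case-split on the size $k$ of the face adjacent to the $3$-face (respectively, $4$-face) across a common edge $ab$. For a $3$-face $abc$: $k = 3$ gives two triangles sharing $ab$ (distance $0$); $k = 4$ violates \ref{Lem1:i}; $k = 5$ gives a $5$-cycle directly, or after decomposing its forced non-simple boundary, a triangle or $4$-cycle sharing $ab$ with $abc$ (violating the triangle-distance hypothesis or \ref{Lem1:i}); $k = 6$ is ruled out by \ref{Lem1:iii}; and $k = 7$ decomposes at a repeated vertex into sub-walks summing to $7$, where every split produces a cycle of length $3$, $4$, or $5$ that either contradicts \ref{Lem1:i}, is itself forbidden, or creates a triangle near $abc$. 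For a $4$-face $abxy$: $k = 3$ violates \ref{Lem1:i}; $k = 4$ concatenates to a $6$-cycle along the exterior of the two $4$-faces; $k = 5$ concatenates similarly to a $7$-cycle; $k = 6$ is ruled out by \ref{Lem1:iii}. For items (3) and (6), I would enumerate the non-simple boundary structures of the $7$-face (respectively, $8$-face with a cut vertex) and check which, when combined with the adjacent $4^-$-face, avoid producing a forbidden cycle or a close pair of triangles; the surviving configurations are precisely those depicted in \cref{4-7-face} and \cref{4-8-face}.

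The principal obstacle is the enumeration for items (3) and (6): one must account for both repeated-vertex and bridge decompositions of the non-simple boundary, carefully locate all implicit triangles arising from each decomposition (to compare against $\mathrm{dist}^{\Delta} \geq 2$), and verify that after ruling out every other possibility only the depicted configurations remain.
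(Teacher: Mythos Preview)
Your approach—decompose the boundary walk of each small face and extract either a forbidden $5$-, $6$-, or $7$-cycle or a pair of triangles at distance at most $1$—is exactly what the paper does; indeed the paper writes out only parts \ref{Lem1:iii} and (3) explicitly and leaves the others to the reader. One remark: the enumeration you anticipate for item (3) is lighter than you fear, since once the $7$-face boundary is identified as a triangle and a $4$-cycle meeting at a cut vertex, part \ref{Lem1:i} immediately forbids any $4^{-}$-face on the triangle side and any $3$-face on the $4$-cycle side, leaving only the configuration of \cref{4-7-face}.
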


\begin{figure}
\centering
\begin{tikzpicture}
\def\s{1.5}
\coordinate (O) at (0, 0);
\coordinate (NE) at (0.5*\s, 0.5*\s);
\coordinate (W) at (-\s, 0);
\coordinate (S) at (0, -\s);
\coordinate (SW) at (-\s, -\s);
\coordinate (y) at (210:0.7*\s);
\coordinate (z) at (240:0.7*\s);
\fill[gray] (O)--(W)--(SW)--(S)--(O)--(z)--(y)--cycle;
\fill[brown] (S)--(NE)--(W)--(O);
\draw (S)--(NE)--(W);
\draw (O)--(y)--(z)--(O);
\draw (W)--(SW)--(S)--(O)--cycle;
\hollownode{O};
\hollownode{NE};
\hollownode{W};
\hollownode{S};
\hollownode{SW};
\hollownode{y};
\hollownode{z};
\end{tikzpicture}
\caption{A $7$-face adjacent to a $4$-face.}
\label{4-7-face}
\end{figure}
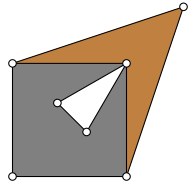

\begin{proof}
Suppose that $f$ is a $6$-face. Since there are no $6$-cycles in $G$, the boundary of $f$ is not a $6$-cycle. Then the boundary of $f$ consists of two triangles, this contradicts the hypothesis that triangles have distance at least two from each other. Hence, there are no $6$-faces in $G$. 

Let $g$ be a $7$-face. Similarly, the boundary of $g$ is not a $7$-cycle, so it can only be a triangle and a $4$-cycle. Let $[xyzxabc]$ be a $7$-face. It follows from \cref{str}\ref{Lem1:i} that none of $xy, yz$ and $xz$ can be incident with a $4^{-}$-face. Similarly, none of $xa, ab, bc$ and $cx$ can be incident with a $3$-face. If the $7$-face $g$ is adjacent to a $4$-face $h$, then $h$ have two consecutive common edges with $xabc$. Hence, it must be as drawn in \cref{4-7-face}. 
\end{proof}

Assume $f$ is an $8$-face that is bounded by an $8$-cycle. We say that $f$ is \emph{special} if it is incident with seven $3$-vertices and a semi-rich $4$-vertex, adjacent to two $3$-faces and two $4$-faces, precisely three of them are light $4^{-}$-faces, and the other $4^{-}$-face containing at least two $4^{+}$-vertices. See an illustration in \cref{special}.

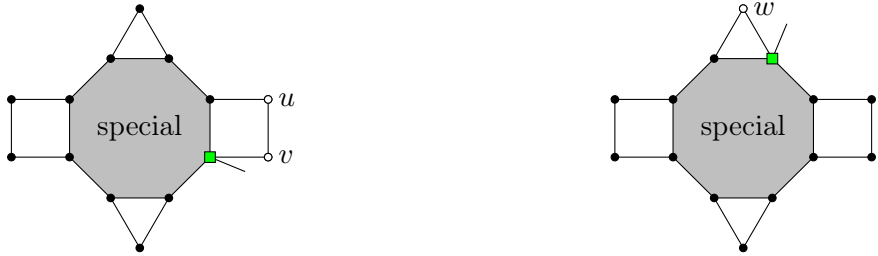
\begin{figure}
\def\s{1}
\def\special{
\foreach \i in {1, 2, ..., 8}
{
\def\pointname{v\i}
\coordinate (\pointname) at ($(\i*45+22.5:\s)$);
}
\draw[fill = lightgray] (v1)--(v2)--(v3)--(v4)--(v5)--(v6)--(v7)--(v8)--cycle;
\foreach \i in {1, 2, ..., 8}{\solidnode{v\i};}
\coordinate (N) at ($(v2)!1!60:(v1)$);
\coordinate (S) at ($(v6)!1!60:(v5)$);
\coordinate (W1) at ($(v3)!1!-90:(v4)$);
\coordinate (W2) at ($(v4)!1!90:(v3)$);
\coordinate (E1) at ($(v8)!1!90:(v7)$);
\coordinate (E2) at ($(v7)!1!-90:(v8)$);
\draw (v1) -- (N) -- (v2);
\draw (v3) -- (W1) -- (W2) -- (v4);
\draw (v5) -- (S) -- (v6);
\draw (v7) -- (E2) -- (E1) -- (v8);
\foreach \v in {N,W1,W2,S,E1,E2}
{\solidnode{\v};}
\node at (0, 0) {special}; 
}
\centering
\subcaptionbox{Type I, where at least one of $u$ and $v$ is a $4^{+}$-vertex.\label{special-1}}[0.55\linewidth]{\begin{tikzpicture}
\special;
\draw (v7) -- ($1.5*(v7)$);
\recsolidnode{v7};
\hollownode{E1};
\hollownode{E2};
\node[right] at (E1) {$u$};
\node[right] at (E2) {$v$};
\end{tikzpicture}}
\subcaptionbox{Type II, where $w$ is a $4^{+}$-vertex.\label{special-2}}[0.4\linewidth]{\begin{tikzpicture}
\special;
\draw (v1) -- ($1.5*(v1)$);
\recsolidnode{v1};
\hollownode{N};
\node[right] at (N) {$w$};
\end{tikzpicture}}
\caption{Two types of special $8$-faces.}
\label{special}
\end{figure}

\begin{lemma}\label{lem:N-special8faces}
A $4$-vertex cannot be incident to two special $8$-faces at opposite.
\end{lemma}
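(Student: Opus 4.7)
The plan is to argue by contradiction. Suppose a 4-vertex $v$ is incident to two special 8-faces $f$ and $f'$ at opposite positions in the cyclic ordering of the four faces at $v$; label these four faces cyclically as $f, g, f', h$, so that $g$ and $h$ are the two faces of $v$ sandwiched between $f$ and $f'$. The first step is to pin down the nature of $g$ and $h$. Since $v$ is the semi-rich 4-vertex of each of $f$ and $f'$, each of these 8-faces has exactly one of its two $v$-edges controlling a $4^-$ face; these controlled edges must border the same side of $v$, for otherwise $h$ would have to be simultaneously a $4^-$ and a $5^+$ face. Hence $g$ is a $4^-$ face and $h$ a $5^+$ face. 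Because $v \in V(g)$ is $4^+$, the face $g$ cannot be light, so $g$ must be the non-light $4^-$ face of both $f$ and $f'$.

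The argument then splits on the shape of $g$. If $g = (v, u, w)$ is a 3-face, then both $f$ and $f'$ are of Type II at $v$: viewed from $f$, one of $\{u, w\}$ lies on $f$'s 8-cycle as a 3-vertex and the other is the $4^+$-vertex $N$ of $f$'s non-light 3-face; analogously from $f'$ with a 3-vertex on $f'$'s 8-cycle and a $4^+$-vertex $N'$. Since $V(f) \cap V(f') = \{v\}$, no vertex of $\{u, w\}$ can lie on both 8-cycles, and no vertex can be simultaneously 3-valent and $4^+$-valent. Running through the four possible matchings of $\{u, w\}$ to the roles $\{v_2, N\}$ from $f$ and $\{v'_2, N'\}$ from $f'$ yields a contradiction in every case.

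The remaining case is that $g = (v, A, B, C)$ is a 4-face, with $A$ on $f$'s 8-cycle and $C$ on $f'$'s 8-cycle. Both $A, C$ are 3-vertices (as the only non-3-vertex on a special 8-face's 8-cycle is $v$), and the non-light condition from both $f$ and $f'$ forces the opposite vertex $B$ to be $4^+$. Let $v_6$ denote $v$'s other neighbor on $f$ and $x$ the other on $f'$; these lie on the light 3-faces $T = (v_5, v_6, S)$ of $f$ and $T' = (v'_5, x, S')$ of $f'$ respectively. The path $v_6 - v - x$ of length 2 combined with $\mathrm{dist}^{\Delta} \geq 2$ pins $d_G(v_6, x) = 2$ exactly, which tightly restricts vertex identifications. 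I then trace the boundary of $h$, which by Lemma \ref{str}\ref{Lem1:v} must be $8^+$, and which contains the subwalk $S - v_6 - v - x - S'$ (since $v_6 S$ and $x S'$ are edges of $T, T'$ shared with $h$). Using the rigid degree-3 neighborhoods of $v_6, x, S, S'$, together with the non-existence of cycles of length $5, 6, 7$ and the triangle-distance condition, I show that every attempted closure of $h$'s boundary forces either a short forbidden cycle or a pair of triangles within distance less than $2$, a contradiction.

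The main obstacle lies in the 4-face subcase: the boundary trace of $h$ admits several candidate closures, and one must exhaustively rule out each possible vertex identification along the walk. Fortunately, the 3-vertex degree constraints on $v_6, x, S, S'$ (and their unique off-$h$ neighbors) together with $\mathrm{dist}^{\Delta} \geq 2$ are sharp enough that each candidate closure collapses to a short cycle or a triangle-pair violation, leaving no consistent embedding.
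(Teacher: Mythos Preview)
Your approach diverges fundamentally from the paper's, and the divergence is fatal. The lemma is stated and proved inside the proof of \cref{STR}, where $G$ is a fixed counterexample: by assumption $\delta(G)\ge 3$ and $G$ contains none of the degree-restricted subgraphs in \cref{UC}. The paper's proof is three lines: from semi-richness of $v$ on both special $8$-faces, exactly one of the two remaining faces at $v$, say $f_1$, is a $4^-$-face; since $v\in f_1$ is a $4^+$-vertex, $f_1$ is the non-light adjacent $4^-$-face of each special $8$-face and hence carries at least two $4^+$-vertices; as the two non-$v$ vertices of $f_1$ lying on the two $8$-cycles are $3$-vertices, $f_1$ must be a $4$-face whose fourth vertex is $4^+$. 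At this point the two boundary $8$-cycles (all $3$-vertices except $v$), together with one light $3$-face on each, realise the configuration \cref{C-special} inside $G$, contradicting the standing counterexample hypothesis.

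You instead try to show that two opposite special $8$-faces at a $4$-vertex are impossible in \emph{any} plane graph with $\mathrm{dist}^{\Delta}\ge 2$ and no $5$-, $6$-, $7$-cycles, appealing only to forbidden cycle lengths and the triangle-distance bound. That stronger statement is not what the lemma asserts, and it is false: such a configuration can sit perfectly well inside a graph of the class --- it produces exactly the subgraph \cref{C-special}, which is on the list precisely because it can occur. Concretely, your $4$-face subcase has two genuine gaps. First, you assert that $v_6$ and $x$ each lie on a light $3$-face of their respective special $8$-face, but the light $4^-$-face controlled by $v_5v_6$ (respectively $v_5'x$) may equally well be a light $4$-face; nothing in the definition rules that out. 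Second, even granting your $3$-face assumption, the claim that every closure of the boundary walk of $h$ forces a forbidden short cycle or a triangle-distance violation is asserted, not proved, and it does not hold: one can complete $h$ to an $8^+$-face by adjoining fresh vertices along its boundary without creating any $5$-, $6$-, or $7$-cycle and while keeping all triangles at pairwise distance at least $2$ (the path $v_6\,v\,x$ already places the two relevant triangles at distance exactly $2$, which is permitted). The missing ingredient is the hypothesis that $G$ avoids \cref{C-special}; without it, no amount of boundary-tracing will close the argument.
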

\begin{proof}
Let $f_{1}, f_{2}, f_{3}$ and $f_{4}$ be four incident faces of a $4$-vertex $v$. Assume $f_{2}$ and $f_{4}$ are two special $8$-faces. By the definition of special $8$-faces, $v$ is a semi-rich $4$-vertex of the incident special $8$-faces $f_{2}$ and $f_{4}$. It follows that exactly one of $f_{1}$ and $f_{3}$, say $f_{1}$, is a $4^{-}$-face. Furthermore, $f_{1}$ is a $4$-face incident with precisely two $4^{+}$-vertices. Hence, $G$ contains a copy of \cref{C-special}.
\end{proof}

We use the famous discharging method to finish the proof. Let $\mu(v) = 2d(v) - 6$ be the initial charge for each vertex $v \in V(G)$ and $\mu(f) = d(f) - 6$ be the initial charge of each face $f \in F(G)$. By the Handshaking theorem and Euler's formula, we have that 
\[
\sum_{x \in V(G) \cup F(G)} \mu(x) = \sum_{v \in V(G)}(2d(v) - 6) + \sum_{f \in F(G)}(d(f) - 6) = -12.
\]
Applying the following discharging rules, we obtain a new charge function $\mu^{*}$. In this procedure, the total sum of charges is preserved. However, we show that $\mu^{*} \geq 0$ for each vertex and each face, a contradiction. For convenience, we use $\tau(a \rightarrow b)$ to denote the charges transferred from the element $a$ to the element $b$. 

The discharge rules are as follows: 
\begin{enumerate}[label = \textbf{R-\arabic*.}, ref = R-\arabic*]
\item\label{R1} Each $4^{-}$-face gets $1$ from each $4^{+}$-vertex on it.

\item\label{R2} Let $f$ and $g$ be two adjacent faces. Then 
\begin{align*}
\tau(g \rightarrow f) = 
&\begin{cases}
          1, & \text{if $f$ is a $(3, 3, 3)$-face.}\\[0.2cm]
\frac{2}{3}, & \text{if $f$ is a $(3, 3, 4^{+})$-face.}\\[0.2cm]
\frac{1}{3}, & \text{if $f$ is a $(3, 4^{+}, 4^{+})$-face.}\\[0.2cm]
\frac{1}{2}, & \text{if $f$ is a $(3, 3, 3, 3)$-face.}\\[0.2cm]
\frac{1}{4}, & \text{if $f$ is a $(3, 3, 3, 4^{+})$-face.}\\[0.2cm]
\frac{1}{2}, & \text{if $g$ is a $(3^{+}, 4^{+}, 4^{+}, 4^{+})$-face and it has a common $(4^{+}, 4^{+})$-edge with $f$.}
\end{cases}
\end{align*}

\item\label{R3} Assume $v$ is a $4$-vertex incident with four faces $f_{1}, f_{2}, f_{3}$ and $f_{4}$ in a cyclic order, and $v$ is incident to at most one $4^{-}$-face. 

\begin{enumerate}
\item\label{R3a} If $f_{1}$ is a $4^{-}$-face, $f_{2}$ is a special $8$-face, and $f_{4}$ is not a special $8$-face, then $v$ sends $\frac{1}{2}$ to $f_{2}$, and $\frac{1}{4}$ to each of $f_{3}$ and $f_{4}$.

\item\label{R3b} If $f_{1}$ is a $4^{-}$-face, neither $f_{2}$ nor $f_{4}$ is a special $8$-face, then $v$ sends $\frac{1}{2}$ to $f_{3}$, and $\frac{1}{4}$ to each of $f_{2}$ and $f_{4}$.

\item\label{R3c} If $v$ is not incident to any $4^{-}$-face, then $v$ sends $\frac{1}{2}$ to each incident face.

\end{enumerate}

\item\label{R4} Let $v$ be a $5$-vertex on an $8$-face $f$. Then 
\begin{align*}
\tau(v \rightarrow f) = 
&\begin{cases}
\frac{3}{4}, & \text{if $v$ is semi-rich with respect to $f$.}\\[0.2cm]
\frac{1}{2}, & \text{otherwise.}
\end{cases}
\end{align*}

\item\label{R5} Each $8$-face receives $1$ from each incident $6^{+}$-vertex.
\end{enumerate}

\begin{lemma}
For every $z \in V(G) \cup F(G)$, we have $\mu^{*}(z) \geq 0$.
\end{lemma}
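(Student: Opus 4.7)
The plan is to verify $\mu^{*}(z) \geq 0$ element by element, using \cref{str} to prune the case list: $G$ has no $5$- or $6$-face, every $3$-face is adjacent only to $8^{+}$-faces by \cref{str}\ref{Lem1:v}, and every $4$-face is adjacent only to $7^{+}$-faces by \cref{str}\ref{Lem1:vi}, so any two $4^{-}$-faces share at most a vertex. In particular, $4^{-}$-faces cannot be consecutive around any vertex.

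For vertices, a $3$-vertex is inert under every rule, so $\mu^{*}(v)=0$. A $4$-vertex has $\mu(v)=2$ and its outflow is exactly $2$ in each of the three subcases of \cref{R3} (namely $1+\tfrac{1}{2}+\tfrac{1}{4}+\tfrac{1}{4}$, $1+\tfrac{1}{4}+\tfrac{1}{2}+\tfrac{1}{4}$, and $4\cdot\tfrac{1}{2}$), and also when two $4^{-}$-faces are incident, since then only \cref{R1} fires and contributes $2$; \cref{lem:N-special8faces} is what guarantees that \cref{R3a} together with \cref{R3b} covers every single-$4^{-}$-face incidence pattern, by ruling out two opposite special $8$-faces at a $4$-vertex. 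A $5$-vertex has $\mu(v)=4$ and is incident to at most two $4^{-}$-faces; the face wedged between two such neighbors must be a $7^{+}$-face on which $v$ is poor, so in the tightest arrangement the outflow is bounded by $2\cdot1+\tfrac{1}{2}+2\cdot\tfrac{3}{4}=4$. For a $k$-vertex with $k\geq 6$, \cref{R1} and \cref{R5} extract at most $1$ per incident face, and $2k-6\geq k$ closes the bound.

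On the face side, small faces recover their deficits directly. For a $3$-face the three neighbors are $8^{+}$-faces by \cref{str}\ref{Lem1:v}, and the coefficients in \cref{R2} combined with \cref{R1} are tuned so that each of the types $(3,3,3)$, $(3,3,4^{+})$, $(3,4^{+},4^{+})$ reaches $0$. A $4$-face is parallel, using that its neighbors are $7^{+}$-faces and the special provision in \cref{R2} for the $(4^{+},4^{+})$-edge of a $(3^{+},4^{+},4^{+},4^{+})$-face. A $7$-face has $\mu(f)=1$, and by \cref{4-7-face} has at most a single $4^{-}$-face neighbor, to which it sends at most $1$ via \cref{R2}. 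For $d(f)\geq 9$ the surplus $d(f)-6\geq 3$ easily absorbs the outflow, which is bounded above by a constant fraction of $d(f)$.

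The main obstacle is the $8$-face case, where $\mu(f)=2$ can be nearly depleted when $f$ is surrounded by $3$- and $4$-faces ringed by $3$-vertices. The strategy is to partition $\partial f$ into maximal $4^{-}$-controlling walks, classify each boundary vertex as rich, semi-rich, or poor, and pair the outflow with the incoming charge from $4^{+}$-vertices under \cref{R3}, \cref{R4}, and \cref{R5}, tracked via the statistic $s_{0}(f)$. The unavoidable configurations of \cref{UC} enter precisely at this stage: the seven patterns \cref{Ca} through \cref{Cg} rule out the worst adjacency layouts around an ordinary $8$-face (forcing a $4^{+}$-vertex or a non-controlling edge wherever a naive count would fail), \cref{C-special} together with \cref{lem:N-special8faces} disposes of special $8$-faces, and \cref{C-f3f4a} and \cref{C-f3f4b} eliminate the remaining two-small-face adjacency patterns. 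Each resulting subcase is a short arithmetic check; the bulk of the labor lies in organizing these subcases and matching each to the correct forbidden configuration.
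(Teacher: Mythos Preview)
Your overall framework matches the paper's: vertex and face checks via the given rules, with the $8$-face the crucial case. The vertex and small-face arguments are fine as written.

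Two genuine gaps remain. First, the $9^{+}$-face hand-wave is insufficient. A naive ``constant fraction of $d(f)$'' bound on the outflow gives at best $\mu^{*}(f) \geq \tfrac{7}{12}d(f) - 6$, which clears only for $d(f) \geq 11$. For $d(f) \in \{9, 10\}$ you need the sharper controlling-path estimates (the paper proves four such claims bounding the charge sent along a maximal $4^{-}$-controlling path by $\tfrac{1}{2}$, $\tfrac{11}{12}$, or $1$ depending on its length and whether it controls a $3$-face) together with $t'_{3} \leq \lfloor d(f)/3 \rfloor$; the $9$-face with $t'_{3} = 3$ then needs its own one-line argument that no $4$-face is adjacent away from the three $3$-faces.

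Second, and more seriously, your assignment of configurations to $8$-face subcases is wrong. Special $8$-faces are \emph{not} ``disposed of'' by \cref{C-special}; they genuinely occur, and the entire point of \ref{R3a} is that the unique semi-rich $4$-vertex sends such a face $\tfrac{1}{2}$ rather than $\tfrac{1}{4}$, which is exactly what balances it. Configuration \cref{C-special} only feeds \cref{lem:N-special8faces}, used on the vertex side to make \ref{R3a} and \ref{R3b} exhaustive. Configurations \cref{C-f3f4a} and \cref{C-f3f4b} do not handle ``remaining two-small-face adjacency patterns''; they arise in the specific subcase where $f$ has a \emph{rich} $4$-vertex $v$ one of whose other incident faces is a special $8$-face (so $v$ gives $f$ only $\tfrac{1}{4}$ under \ref{R3a}), and the resulting forced picture around both $8$-faces is precisely \cref{C-f3f4a} or \cref{C-f3f4b}. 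Configurations \cref{Ca}--\cref{Cg} handle the complementary case of no rich $4$-vertex on $f$, split further by whether $f$ has three or four maximal $4^{-}$-controlling paths and by the number and placement of poor $4$-vertices. The organization of these subcases \emph{is} the work; your outline has the roles of the configurations tangled, and ``short arithmetic checks'' substantially understates what remains to be done.
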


\begin{proof}
Let $v$ be an arbitrary vertex in $G$. If $d(v)=3$, then $\mu^{*}(v)=\mu(v)=0$. Let $v$ be a $4$-vertex incident with four faces $f_{1}, f_{2}, f_{3}$ and $f_{4}$ in a cyclic order. We have the following scenarios:

$\bullet$ Suppose that $v$ is incident with at least two $4^{-}$-faces. Since there are no adjacent $4^{-}$-faces, $v$ is incident with precisely two $4^{-}$-faces at opposite. By \ref{R1}, $v$ gives 1 to each incident $4^{-}$-face, then $\mu^{*}(v) = 2 \times 4 - 6 - 1\times 2 = 0$.

$\bullet$ If $f_{1}$ is a $4^{-}$-face, then by \cref{lem:N-special8faces}, it is impossible that both $f_{2}$ and $f_{4}$ are special $8$-faces. 

$\bullet$ Suppose that $f_{1}$ is a $4^{-}$-face, $f_{2}$ is a special $8$-face, but $f_{4}$ is not. By \ref{R1} and \ref{R3a}, $v$ sends $1$ to $f_{1}$, $\frac{1}{2}$ to $f_{2}$, and $\frac{1}{4}$ to each of $f_{3}$ and $f_{4}$. Then $\mu^{*}(v) = 2 \times 4 - 6 - 1 -\frac{1}{2} - \frac{1}{4}\times 2 = 0$.

$\bullet$ Suppose that $f_{1}$ is a $4^{-}$-face, and neither $f_{2}$ nor $f_{4}$ is a special $8$-face. By \ref{R1} and \ref{R3b}, $v$ sends $1$ to $f_{1}$, $\frac{1}{2}$ to $f_{3}$, and $\frac{1}{4}$ to each of $f_{2}$ and $f_{4}$. Then $\mu^{*}(v) = \mu(v) - 1 - \frac{1}{2} - \frac{1}{4} \times 2 = 0$. 

$\bullet$ If every face incident to $v$ is a $5^{+}$-face, then it should send $\frac{1}{2}$ to each incident face by \ref{R3c}, thus $\mu^{*}(v) = 2 \times 4 - 6 - \frac{1}{2} \times 4 = 0$.

Let $d(v)=5$. Since $4^{-}$-faces are not adjacent, $v$ is incident to at most two $4^{-}$-faces. According to \ref{R4}, when $v$ is semi-rich with respect to an $8$-face, it gives $\frac{3}{4}$ to each such 8-face; if $v$ is a rich or poor $5$-vertex on an $8$-face, then it gives $\frac{1}{2}$ to each such face. If $v$ is incident with two $4^{-}$-faces, then $\mu^{*}(v)\geq 2 \times 5 - 6 - 1\times 2-\frac{3}{4}\times 2-\frac{1}{2}=0$. If $v$ is incident with precisely one $4^{-}$-face, then $\mu^{*}(v)\geq 2 \times 5 - 6 - 1 -\frac{3}{4}\times 2-\frac{1}{2}\times 2 > 0$. If every face incident with $v$ is a $5^{+}$-face, then $\mu^{*}(v)\geq 2 \times 5 - 6 - \frac{1}{2}\times 5 > 0$.

Let $d(v)\geq 6$. According to the discharging rules, $v$ sends at most $1$ to each incident face, implying $\mu^{*}(v) \geq 2d(v) - 6 - 1 \times d(v) \geq 0$.

Next, we check $\mu^{*}$ for every face. Let $f$ be an arbitrary $4^{-}$-face. By \cref{str}\ref{Lem1:v} and \cref{str}\ref{Lem1:vi}, all faces adjacent to $f$ are $7^{+}$-faces. If there is no $4^{+}$-vertices on $f$, then it receives $1$ from each adjacent face, implying $\mu^{*}(f) = 3 - 6 + 1 \times 3 = 0$. If $f$ is a $(3, 3, 4^{+})$-face, then it receives $\frac{2}{3}$ from each adjacent face, and $1$ from the incident $4^{+}$-vertex, thus $\mu^{*}(f) = 3 - 6 + 1 + \frac{2}{3} \times 3 = 0$. If there are exactly two $4^{+}$-vertices, then $\mu^{*}(f) = 3 - 6 + 1 \times 2 + \frac{1}{3} \times 3 = 0$ by \ref{R2}. If $f$ is a $(4^{+}, 4^{+}, 4^{+})$-face, then $\mu^{*}(f) = 3 - 6 + 1 \times 3 = 0$ by \ref{R2}. If $f$ is a $(3, 3, 3, 3)$-face, then it receives $\frac{1}{2}$ from each adjacent face, thus $\mu^{*}(f) = 4 - 6 + \frac{1}{2} \times 4 = 0$. If $f$ is a $(3, 3, 3, 4^{+})$-face, then it receives $1$ from the incident $4^{+}$-vertex, and $\frac{1}{4}$ from each adjacent face, thus $\mu^{*}(f) = 4 - 6 + \frac{1}{4} \times 4 + 1 = 0$. If $f$ is incident with precisely two $3$-vertices, then it receives $1$ from each of the two incident $4^{+}$-vertices, thus $\mu^{*}(f) = 4 - 6 + 1 \times 2 = 0$. If $f$ is a $(3, 4^{+}, 4^{+}, 4^{+})$-face, then it sends $\frac{1}{2}$ to each adjacent face that has a common $(4^{+}, 4^{+})$-edge with $f$, and thus $\mu^{*}(f) = 4 - 6 + 1 \times 3 - \frac{1}{2} \times 2 = 0$. If $f$ is a $(4^{+}, 4^{+}, 4^{+}, 4^{+})$-face, then it sends $\frac{1}{2}$ to each adjacent face, and thus $\mu^{*}(f) = 4 - 6 + 1 \times 4 - \frac{1}{2} \times 4 = 0$. 

Since there are no $5$-cycles, there are no $5$-faces. By \cref{str}\ref{Lem1:iii}, there are no $6$-faces in $G$. For a $7$-face $f$, if it is adjacent to a $4^{-}$-face, then it must be as depicted in \cref{4-7-face}, thus it sends at most $\frac{1}{4} \times 2$ via the incident edges by \ref{R2}, thus $\mu^{*}(f) \geq 7 - 6 - \frac{1}{4} \times 2 = \frac{1}{2} > 0$. 

Let $f$ be an $8^{+}$-face. 

\begin{claim}\label{Claim1}
If $P$ is a $4$-controlling path on $f$, then $f$ sends at most $\frac{1}{2}$ in total via the edges on $P$.
\end{claim}
\begin{proof}
Let $P=u_{1}u_{2}\dots u_{t}$. If $P$ has length one, then $f$ sends at most $\frac{1}{2}$ via the edge of $u_{1}u_{2}$. Assume the length of $P$ is at least two. Since there are no adjacent $4^{-}$-faces, the interior vertices $u_{2},\dots,u_{t-1}$ are all $4^{+}$ vertices, thus $f$ sends at most $\frac{1}{4}$ via each of $u_{1}u_{2}$ and $u_{t-1}u_{t}$ and no charge via the edge $u_{2}u_{3}, u_{3}u_{4},\dots, u_{t-2}u_{t-1}$, thus it sends at most $ \frac{1}{4}\times 2 = \frac{1}{2}$ via the path $P$.
\end{proof}

\begin{claim}\label{Claim2}
Let $P = xyzu$ be a $4^{-}$-controlling path on $f$. Then $f$ sends at most $\frac{11}{12}$ in total via the edges on $P$. 
\end{claim}
\begin{proof}
Since there are no adjacent $4^{-}$-faces, we have $d(y), d(z) \geq 4$. If the three controlled faces are $4$-faces, then $f$ sends at most $\frac{1}{4}$ via each of $xy$ and $zu$, and no charge via $yz$, thus it sends at most $\frac{1}{4} \times 2 = \frac{1}{2} < \frac{11}{12}$ via the path $P$. Since the distance of triangles is least two, $P$ controls at most one $3$-face. If $xy$ or $zu$, say $xy$, controls a $3$-face, then $f$ sends at most $\frac{2}{3}$ via $xy$, $\frac{1}{4}$ via $zu$, and no charge via $yz$, thus it sends at most $\frac{2}{3} + \frac{1}{4} = \frac{11}{12}$ via the path $P$. If $yz$ controls a $3$-face, then $f$ sends at most $\frac{1}{3}$ via $yz$, at most $\frac{1}{4}$ via each of $xy$ and $zu$, thus it sends at most $\frac{1}{3} + \frac{1}{4}\times 2 < \frac{11}{12}$ via the path $P$. 
\end{proof}

Similarly, we have the following claims for the $4^{-}$-controlling path of length at most two.
\begin{claim}\label{Claim3}
Let $P$ be a $4^{-}$-controlling path of length two on $f$. Then $f$ sends at most $\frac{11}{12}$ in total via the edges on $P$. 
\end{claim}

\begin{claim}\label{Claim4}
If $uv$ is a $4^{-}$-controlling path on $f$, then $f$ sends at most $1$ via $uv$. 
\end{claim}

\begin{remark}\label{Remark}
In the proof of the above four claims, when we give an upper bound on the total charge that $v$ sends via a $4^{-}$-controlling path $P$, we treat all vertices on the controlled $4^{-}$-faces as $3$-vertices except the interior vertices of the path $P$. Note that all interior vertices on $P$ are $4^{+}$-vertices. Hence, when we know another $4^{+}$-vertex is on the controlled face, it will ``save'' at least $\frac{1}{4}$. For example, if $P$ is a maximal $4$-controlling path of length one, then $f$ sends at most $\frac{1}{2}$ via the path $P$; moreover, if the controlled face contains precisely one $4^{+}$-vertex, then it ``saves'' $\frac{1}{4}$, that is, $f$ sends $\frac{1}{2} - \frac{1}{4} = \frac{1}{4}$ via the path $P$, this is consistent with \ref{R2}. If the controlled face is incident with precisely two $4^{+}$-vertices, then it ``saves'' $\frac{1}{4} \times 2 = \frac{1}{2}$, so $f$ sends no charge via the path $P$, this is also consistent with \ref{R2}.
\end{remark}

Let $t'_{3} = \textrm{the number of 3-faces that are adjacent to $f$}$. Thus there are at most $\left\lfloor \frac{d(f) - s_{0}(f) - 2t'_{3}}{2}\right\rfloor$ $4$-controlling paths that are disjoint from controlled $3$-faces. By \cref{Claim1,Claim2,Claim3,Claim4}, 
\begin{equation}\label{eq1}
\mu^{*}(f) \geq \mu(f) - 1 \times t'_{3} - \frac{1}{2} \left\lfloor \frac{d(f) - s_{0}(f) - 2t'_{3}}{2}\right\rfloor = \left(d(f) - \frac{1}{2} \left\lfloor \frac{d(f) - s_{0}(f)}{2}\right\rfloor\right) - 6 - \frac{1}{2} t'_{3}. \tag{$\star$}
\end{equation}

Since the distance of triangles is at least two, we have $t'_{3} \leq\lfloor \frac{d(f)}{3}\rfloor$. It is easy to verify that $\mu^{*}(f)\geq 0$ when $f$ is a $10^{+}$-face or a $9$-face with $t'_{3} \leq 2$. If $d(f) = 9$ and $t'_{3} = 3$, then there is no adjacent $4$-faces disjoint from the adjacent $3$-faces, thus $\mu^{*}(f) \geq 3 - 1 \times t_{3}' = 0$.

From now on, we assume that $f$ is an $8$-face. First, assume that the boundary of $f$ is not a cycle. Since $\mathrm{dist}^{\Delta} \geq 2$ and there is no $5$-cycles, the boundary of $f$ consists of two $4$-cycles. In this case, $f$ is not adjacent to any $3$-face and is adjacent to at most two $4$-faces, see \cref{4-8-face}. By \ref{R2}, $f$ sends at most $\frac{1}{4}\times 4$ to the adjacent $4$-faces, thus $\mu^{*}(f) \geq 8 - 6 -\frac{1}{4} \times 4 > 0$. Next, suppose $f$ is bounded by an $8$-cycle. Now, we have $t_{3}' \leq \lfloor \frac{8}{3}\rfloor = 2$. 

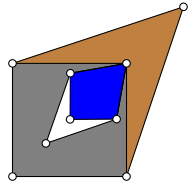
\begin{figure}
\centering
\begin{tikzpicture}
\def\s{1.5}
\coordinate (O) at (0, 0);
\coordinate (NE) at (0.5*\s, 0.5*\s);
\coordinate (W) at (-\s, 0);
\coordinate (S) at (0, -\s);
\coordinate (SW) at (-\s, -\s);
\coordinate (y) at (190:0.5*\s);
\coordinate (m) at (225:1*\s);
\coordinate (q) at (225:0.7*\s);
\coordinate (z) at (260:0.5*\s);
\fill[gray] (O)--(W)--(SW)--(S)--(O)--(z)--(m)--(y)--(O)--(z)--(q)--(y)--cycle;
\fill[brown] (S)--(NE)--(W)--(O);
\fill[blue] (O)--(y)--(q)--(z)--(O);
\draw (S)--(NE)--(W);
\draw (O)--(y)--(m)--(z)--(O)--cycle;
\draw (O)--(y)--(q)--(z)--(O)--cycle;
\draw (W)--(SW)--(S)--(O)--cycle;
\hollownode{O};
\hollownode{NE};
\hollownode{W};
\hollownode{S};
\hollownode{SW};
\hollownode{y};
\hollownode{m};
\hollownode{q};
\hollownode{z};
\end{tikzpicture}
\caption{Two $4$-faces share edges with a common $8$-face.}
\label{4-8-face}
\end{figure}

Suppose $f$ contains at most two maximal $4^{-}$-controlling paths. If $f$ sends at most $1$ via each maximal path, then $\mu^{*}(f) \geq 8 - 6 - 1 \times 2 = 0$. Assume $f$ sends more than $1$ via a maximal path $P$. Then $P$ has length at least four and controls two $3$-faces. It is easy to check that $f$ sends at most $\frac{2}{3} \times 2 = \frac{4}{3}$ via the path $P$, and at most $\frac{1}{2}$ via the other maximal path (if it exists). Hence, $\mu^{*}(f) \geq 8 - 6 -\frac{4}{3} - \frac{1}{2} > 0$. 

Assume $f$ contains at least three maximal $4^{-}$-controlling paths. By parity, we have $s_{0}(f) \leq 2$ and each maximal $4^{-}$-controlling path of $f$ has length at most three. By \cref{Claim1,Claim2,Claim3,Claim4}, it sends at most $\frac{1}{2}$ via each maximal $4$-controlling path, and at most $1$ to each of the other maximal $4^{-}$-controlling path. It follows that $f$ sends at most $1 \times 2 + \frac{1}{2} \times 2 = 3$ in total. 

If $f$ contains a $6^{+}$-vertex, then $\mu^{*}(f) \geq 8 - 6 - 3 + 1 = 0$ by \ref{R5}.

Suppose that $f$ is incident to a semi-rich $5$-vertex $u$. Then it receives $\frac{3}{4}$ from each such vertex by \ref{R4}. Meanwhile, the $5$-vertex $u$ ``saves'' $\frac{1}{4}$ for the expenditure by \cref{Remark}, thus $\mu^{*}(f) \geq 8 - 6 - 3 + \frac{1}{4} + \frac{3}{4} = 0$.

Suppose that $f$ is incident to a poor or a rich $5$-vertex, namely $w$. By parity, $f$ must have precisely three maximal $4^{-}$-controlling paths. Thus, $f$ receives at least $\frac{1}{2}$ from $w$ by \ref{R4}, resulting in $\mu^{*}(f) \geq 8 - 6 - 1\times 2 - \frac{1}{2} + \frac{1}{2} = 0$. 

Assume every vertex on $f$ is a $4^{-}$-vertex. 

\begin{case}
There exists a rich $4$-vertex on $f$.
\end{case}

By parity, $f$ contains precisely three maximal $4^{-}$-controlling paths. Similar to the previous arguments, $f$ sends at most $1 \times 2 + \frac{1}{2} = \frac{5}{2}$. If $f$ can receive at least $\frac{1}{2}$ in total from its incident vertices, then $\mu^{*}(f) \geq 8 - 6 - \frac{5}{2} + \frac{1}{2} = 0$. So we may assume that $f$ can only receive less than $\frac{1}{2}$ in total from its incident vertices. Let $f = [vu_{1}u_{2}xyzw_{2}w_{1}]$ be an $8$-face, and let $v$ be a rich $4$-vertex with four incident faces $f_{1}, f_{2}, f_{3}$ and $f_{4}$ in a cyclic order and $f = f_{3}$. By \cref{lem:N-special8faces}, at most one of $f_{2}$ and $f_{4}$ is a special $8$-face. According to the discharging rules, we have $\tau(v \rightarrow f) = \frac{1}{4}$.

According to the discussions above, we can assume that each rich $4$-vertex on $f$ sends $\frac{1}{4}$ to $f$. It follows that $f$ contains precisely one rich $4$-vertex, namely $v$, for otherwise $\mu^{*}(f) \geq 8 - 6 - \frac{5}{2} + \frac{1}{4} \times 2 = 0$. If $f$ sends at most $\frac{9}{4}$ in total, then $\mu^{*}(f) \geq 8 - 6 - \frac{9}{4} + \frac{1}{4} = 0$. So we may assume that $f$ sends more than $\frac{9}{4}$ in total. Since $f$ contains precisely three maximal $4^{-}$-controlling paths and $f$ is adjacent to at most two $3$-faces, we conclude that there is a maximal $4$-controlling path $P$. Then $f$ sends more than $\frac{1}{4}$ via $P$, so $P$ controls a $(3, 3, 3, 3)$-face or two $(3, 3, 3, 4)$-faces. According to the discharging rules, $f$ sends $\frac{1}{2}$ in total via this maximal $4$-controlling path $P$. Let the other two maximal $4^{-}$-controlling paths be $P_{1}$ and $P_{2}$. Since $f$ sends more than $\frac{9}{4}$ in total, we have that $f$ sends at least $\frac{3}{4}$ via each of $P_{1}$ and $P_{2}$. It follows that neither $P_{1}$ nor $P_{2}$ is a maximal $4$-controlling path by \cref{Claim1,Claim2,Claim3,Claim4}. Note that there are at most seven vertices on maximal $4^{-}$-controlling paths, and one from $\{P_{1}, P_{2}\}$, say $P_{1}$, has length one. Thus, $P_{1}$ controls a $(3, 3, 3)$-face. Moreover, $P_{2}$ controls a $(3, 3, 3)$-face, or $P_{2}$ is a $(3, 4, 3)$-path controlling a $(3, 3, 4)$-face and a $(3, 3, 3, 4)$-face. Since $\tau(v \rightarrow f) = \frac{1}{4}$, we have $f_{2}$ or $f_{4}$, say $f_{4}$, is a special $8$-face. Since $f$ is adjacent to two $3$-faces and $\mathrm{dist}^{\Delta} \geq 2$, the vertex $v$ is not incident with a $3$-face. Thus, $f_{4}$ is a special $8$-face of type I. Now, $G$ contains \cref{C-f3f4a} or \cref{C-f3f4b} as a subgraph.

\begin{case}
There exists no rich $4$-vertices on $f$. 
\end{case}

Recall that $f$ is adjacent to at most two $3$-faces, at most two of the maximal $4^{-}$-controlling paths can control some $3$-faces. Similar to the previous cases, $f$ sends at most $1 \times 2 + \frac{1}{2} \times 2 = 3$. Assume that $f$ contains at least two semi-rich $4$-vertices. By \ref{R3}, $f$ receives at least $\frac{1}{4}$ from each incident semi-rich $4$-vertex. Moreover, each semi-rich $4$-vertex ``saves'' at least $\frac{1}{4}$ by \cref{Remark}. Thus, $\mu^{*}(f) \geq 8 - 6 - 3 + \frac{1}{4} \times 2 + \frac{1}{4} \times 2 = 0$. Assume that there is at most one semi-rich $4$-vertex with respect to $f$.

\begin{subcase}
$f$ contains precisely three maximal $4^{-}$-controlling paths. 
\end{subcase}
Similar to the above cases, $f$ sends at most $1 \times 2 + \frac{1}{2} = \frac{5}{2}$. If $f$ contains a semi-rich $4$-vertex, then it sends $\frac{1}{4}$ to $f$, and it also ``saves'' at least $\frac{1}{4}$ for $f$, yielding $\mu^{*}(f) \geq 8 - 6 - \frac{5}{2} + \frac{1}{4} \times 2 = 0$. So every $4$-vertex on $f$ is poor. Since $f$ contains precisely three maximal $4^{-}$-controlling paths, it contains at most two poor $4$-vertices. If $f$ controls a $(4, 4, 3^{+}, 4^{+})$-face via a $(4, 4)$-edge, then it receives $\frac{1}{2}$ from this $4$-face by \ref{R2}, implying $\mu^{*}(f) \geq 8 - 6 - \frac{5}{2} + \frac{1}{2} = 0$. If $f$ controls a $(4, 4, 4^{+})$-face, then $f$ contains a maximal controlling path of length three and it sends at most $\frac{1}{2}$ via this path, thus $\mu^{*}(f) \geq 8 - 6 - 1 - \frac{1}{2} \times 2 = 0$. Hence, every $(4, 4)$-edge on $f$ controls a $(4, 4, 3)$-face or $(4, 4, 3, 3)$-face.

A controlled $4^{-}$-face $g$ is \emph{good} if it is controlled by a $(3, 3)$-edge with a $4^{+}$-vertex on $g$, or controlled by a $(3, 4)$-edge with at least two $4^{+}$-vertices on $g$. By \cref{Remark}, every good face ``saves'' at least $\frac{1}{4}$ for $f$. If $f$ controls at least two good faces, then it ``saves'' at least $\frac{1}{4} \times 2 = \frac{1}{2}$ in total, and then $\mu^{*}(f) \geq 8 - 6 - \frac{5}{2} + \frac{1}{2} = 0$. So we can assume that $f$ controls at most one good face. 

Assume that $f$ contains no poor $4$-vertex. Then $f$ is adjacent to at least two light $4^{-}$-faces. If all the adjacent light faces are $4$-faces, then $\mu^{*}(f) \geq 8 - 6 - 1 - \frac{1}{2} \times 2 = 0$. Hence, $f$ is adjacent to a light $3$-face, and \cref{Ca} is appeared in $G$, a contradiction.

Assume that $f$ contains precisely one poor $4$-vertex. Then $G$ contains a subgraph as depicted in \cref{Cb} or \cref{Cc}, a contradiction.

Assume that $f$ contains two non-adjacent poor $4$-vertices. Then either \cref{Cd} or \cref{Ce} is a subgraph of $G$, a contradiction.

Assume that $f$ contains two adjacent poor $4$-vertices. Recall that every $(4, 4)$-edge on $f$ controls a $(4, 4, 3)$-face or $(4, 4, 3, 3)$-face. Therefore, $G$ contains a subgraph as depicted in \cref{Cf}, a contradiction. 

\begin{subcase}
$f$ contains precisely four maximal $4^{-}$-controlling paths. 
\end{subcase}
Then each maximal $4^{-}$-controlling path has length one. If each controlled face contains a $4^{+}$-vertex, then each such $4^{+}$-vertex ``saves'' at least $\frac{1}{4}$ for $f$, implying $\mu^{*}(f) \geq 8 - 6 - 3 + \frac{1}{4} \times 4 = 0$. Hence, $f$ controls a light $4^{-}$-face. If $f$ contains no semi-rich $4$-vertex, then each vertex on $f$ is a $3$-vertex, and there is a subgraph as depicted in \cref{Ca}, which leads to a contradiction. 

Next, we consider the case that $f$ contains precisely one semi-rich $4$-vertex, say $x$. If $f$ is adjacent to precisely one light $4^{-}$-face, then $x$ sends $\frac{1}{4}$ to $f$ by \ref{R3}, and each of the three non-light $4^{-}$-face ``saves'' at least $\frac{1}{4}$ for $f$, yielding $\mu^{*}(f) \geq 8 - 6 - 3 + \frac{1}{4} + \frac{1}{4} \times 3 = 0$. Hence, $f$ is adjacent to at least two light $4^{-}$-faces. Suppose that $xy$ controls a $4^{-}$-face $h = [xyz]$ or $h = [xyzu]$. If $h$ contains precisely one $4^{+}$-vertex, then there is a subgraph as depicted in \cref{Cc} or \cref{Cg}, a contradiction. So $h$ contains two $4^{+}$-vertices. 

Assume $h$ is a $4$-face. If $f$ sends at most $\frac{9}{4}$ in total to the other three controlled faces, then $\mu^{*}(f) \geq 8 - 6 - \frac{9}{4} + \frac{1}{4} = 0$, where the $\frac{1}{4}$ is given from $x$ by \ref{R3}. The other case is when $f$ sends more than $\frac{9}{4}$ in total to the other three controlled faces. Then $f$ controls two light $3$-faces and one light $4$-face. Now, $f$ is a special $8$-face, and $x$ sends $\frac{1}{2}$ to $f$ by \ref{R3}, thus $\mu^{*}(f) \geq 8 - 6 - 1 \times 2 - \frac{1}{2} + \frac{1}{2} = 0$. 

Assume $h$ is a $3$-face. Indeed, $h$ is a $(3, 4, 4^{+})$-face, and $f$ sends $\frac{1}{3}$ to $h$. If $f$ sends at most $\frac{23}{12}$ in total to the other three controlled faces, then $\mu^{*}(f) \geq 8 - 6 - \frac{23}{12} - \frac{1}{3} + \frac{1}{4} = 0$. The other case is that $f$ sends more than $\frac{23}{12}$ in total to the other three controlled faces. Hence, $f$ controls one light $3$-face and two light $4$-faces. Now, $f$ must be a special $8$-face. Then $x$ sends $\frac{1}{2}$ to $f$ by \ref{R3}, and $\mu^{*}(f) \geq 8 - 6 - 1 - \frac{1}{2} \times 2 - \frac{1}{3} + \frac{1}{2} > 0$. 
\end{proof}

\begin{figure}
\def\s{1}
\def\8cycle{
\foreach \i in {1, 2, ..., 8}
{
\def\pointname{v\i}
\coordinate (\pointname) at ($(\i*45+22.5:\s)$);
}
\draw (v1)--(v2)--(v3)--(v4)--(v5)--(v6)--(v7)--(v8)--cycle;
\foreach \i in {1, 2, ..., 8}{\solidnode{v\i};}
}
\subcaptionbox{\label{4a}}{\begin{tikzpicture}
\8cycle
\draw (v5) arc (180:360:0.383);
\draw (v1)node[above]{$x_{5}$}--(v2)node[above]{$x_{6}$}--(v3)node[left]{$x_{7}$}--(v4)node[left]{$x_{8}$};
\draw (v5)node[left]{$x_{1}$}--(v6)node[right]{$x_{2}$}--(v7)node[right]{$x_{3}$}--(v8)node[right]{$x_{4}$};
\end{tikzpicture}}
\subcaptionbox{\label{4b}}{\begin{tikzpicture}
\8cycle
\draw (v5) arc (180:360:0.383);
\draw (v8) arc (-45:135:0.383);
\draw (v8) -- ($1.3*(v8)$);
\draw (v1)node[above]{$x_{5}$}--(v2)node[above]{$x_{6}$}--(v3)node[left]{$x_{7}$}--(v4)node[left]{$x_{8}$};
\draw (v5)node[left]{$x_{1}$}--(v6)node[right]{$x_{2}$}--(v7)node[right]{$x_{3}$}--(v8)node[right]{$x_{4}$};
\recsolidnode{v8};
\end{tikzpicture}}
\subcaptionbox{\label{4c}}{\begin{tikzpicture}
\8cycle
\draw (v1) arc (0:180:0.383);
\draw (v5) arc (180:360:0.383);
\draw (v1) -- ($1.3*(v1)$);
\draw (v1)node[right]{$x_{5}$}--(v2)node[left]{$x_{6}$}--(v3)node[left]{$x_{7}$}--(v4)node[left]{$x_{8}$};
\draw (v5)node[left]{$x_{1}$}--(v6)node[right]{$x_{2}$}--(v7)node[right]{$x_{3}$}--(v8)node[right]{$x_{4}$};
\recsolidnode{v1};
\end{tikzpicture}}
\subcaptionbox{\label{4d}}{\begin{tikzpicture}
\8cycle
\draw (v1) arc (0:180:0.383);
\draw (v5) arc (180:360:0.383);
\draw (v6) arc (-135:45:0.383);
\draw (v1) -- ($1.3*(v1)$);
\draw (v1)node[above]{$x_{5}$}--(v2)node[left]{$x_{6}$}--(v3)node[left]{$x_{7}$}--(v4)node[left]{$x_{8}$};
\draw (v5)node[left]{$x_{1}$}--(v6)node[below]{$x_{2}$}--(v7)node[right]{$x_{3}$}--(v8)node[right]{$x_{4}$};
\recsolidnode{v1};
\recsolidnode{v6};
\end{tikzpicture}}
\subcaptionbox{\label{4e}}{\begin{tikzpicture}
\8cycle
\draw (v8) -- ($1.3*(v8)$);
\draw (v3) arc (90:270:0.383);
\draw (v5) arc (180:360:0.383);
\draw (v3) -- ($1.3*(v3)$);
\draw (v8) arc (-45:135:0.383);
\draw (v1)node[above]{$x_{5}$}--(v2)node[above]{$x_{6}$}--(v3)node[left]{$x_{7}$}--(v4)node[left]{$x_{8}$};
\draw (v5)node[left]{$x_{1}$}--(v6)node[right]{$x_{2}$}--(v7)node[right]{$x_{3}$}--(v8)node[right]{$x_{4}$};
\recsolidnode{v8};
\recsolidnode{v3};
\end{tikzpicture}}
\subcaptionbox{\label{4f}}{\begin{tikzpicture}
\8cycle
\draw (v1) arc (0:180:0.383);
\draw (v8) -- ($1.3*(v8)$);
\draw (v8) arc (-45:135:0.383);
\draw (v5) arc (180:360:0.383);
\draw (v1)node[above]{$x_{5}$}--(v2)node[left]{$x_{6}$}--(v3)node[left]{$x_{7}$}--(v4)node[left]{$x_{8}$};
\draw (v5)node[left]{$x_{1}$}--(v6)node[below]{$x_{2}$}--(v7)node[right]{$x_{3}$}--(v8)node[right]{$x_{4}$};
\recsolidnode{v8};
\recsolidnode{v1};
\end{tikzpicture}}
\subcaptionbox{\label{4g}}{\begin{tikzpicture}
\8cycle
\draw (v5) arc (180:360:0.383);
\draw (v7) arc (-90:90:0.383);
\draw (v7) -- ($1.3*(v7)$);
\draw (v1)node[above]{$x_{5}$}--(v2)node[above]{$x_{6}$}--(v3)node[left]{$x_{7}$}--(v4)node[left]{$x_{8}$};
\draw (v5)node[left]{$x_{1}$}--(v6)node[right]{$x_{2}$}--(v7)node[below]{$x_{3}$}--(v8)node[right]{$x_{4}$};
\recsolidnode{v7};
\end{tikzpicture}}
\subcaptionbox{\label{4h}}{\begin{tikzpicture}
\begin{scope}[shift={(-1,0)},rotate=22.5]
\8cycle
\coordinate (H) at ($(v2)!1!60:(v1)$);
\draw (v1) -- (H) -- (v2);
\draw (H)node[above]{$w_{1}$};
\solidnode{H};
\draw (v1)node[above]{$v_{6}$}--(v2)node[left]{$v_{5}$}--(v3)node[left]{$v_{4}$}--(v4)node[below]{$v_{3}$};
\draw (v5)node[below]{$v_{2}$}--(v6)node[below]{$v_{1}$};
\draw (v8)node[above]{$v_{7}$};
\end{scope}
\begin{scope}[shift={(1,0)},rotate=22.5]
\8cycle
\coordinate (H) at ($(v1)!1!60:(v8)$);
\draw (v8) -- (H) -- (v1);
\draw (H)node[above]{$w_{2}$};

\draw (v1)node[above]{$u_{6}$}--(v2)node[above]{$u_{7}$}--(v3)node[right]{$v$}--(v4)node[below]{$u_{1}$};
\draw (v5)node[below]{$u_{2}$}--(v6)node[below]{$u_{3}$}--(v7)node[right]{$u_{4}$}--(v8)node[right]{$u_{5}$};
\solidnode{H};
\end{scope}
\recsolidnode{v3};
\end{tikzpicture}}
\subcaptionbox{\label{4i}}{\begin{tikzpicture}
\begin{scope}[shift={(-0.925,0)}]
\foreach \i in {1, 2, ..., 8}
{
\def\pointname{u\i}
\coordinate (\pointname) at ($(\i*45+22.5:\s)$);
}
\draw (u1)--(u2)--(u3)--(u4)--(u5)--(u6)--(u7)--(u8)--cycle;
\foreach \i in {1, 2, ..., 8}{\solidnode{u\i};}
\coordinate (H) at ($(u2)!1!60:(u1)$);
\draw (u1)node[above]{$u_{6}$}--(u2)node[left]{$u_{5}$}--(u3)node[left]{$u_{4}$}--(u4)node[below]{$u_{3}$};
\draw (u5)node[below]{$u_{2}$}--(u6)node[below]{$u_{1}$}--(u7)node[below]{$u$};
\draw (u8)node[above]{$v$};
\end{scope}
\begin{scope}[shift={(0.925,0)}]
\8cycle
\coordinate (H) at ($(v1)!1!60:(v8)$);
\draw (v8) -- (H) -- (v1);
\solidnode{H};
\coordinate (H1) at ($(v7)!1!90:(v6)$);
\coordinate (H2) at ($(v6)!1!-90:(v7)$);
\draw (v6) -- (H2) -- (H1) -- (v7);
\solidnode{H1};
\solidnode{H2};
\end{scope}
\draw (u6) -- (v5);
\draw (v3) -- ($(u1)-(u8)+(v2)-(v3)$);
\recsolidnode{v3};
\draw (v1)node[above]{$v_{2}$}--(v2)node[left]{$v_{1}$};
\draw (v5)node[below]{$v_{6}$}--(v6)node[below]{$v_{5}$}--(v7)node[right]{$v_{4}$}--(v8)node[right]{$v_{3}$};
\draw (H1)node[right]{$w_{2}$};
\draw (H2)node[below]{$w_{3}$};
\draw (H)node[above]{$w_{1}$};
\end{tikzpicture}}
\subcaptionbox{\label{4j}}{\begin{tikzpicture}
\begin{scope}[shift={(-0.925,0)}]
\foreach \i in {1, 2, ..., 8}
{
\def\pointname{u\i}
\coordinate (\pointname) at ($(\i*45+22.5:\s)$);
}
\draw (u1)--(u2)--(u3)--(u4)--(u5)--(u6)--(u7)--(u8)--cycle;
\foreach \i in {1, 2, ..., 8}{\solidnode{u\i};}
\coordinate (H) at ($(u2)!1!60:(u1)$);
\draw (u1)node[above]{$u_{6}$}--(u2)node[above]{$u_{5}$}--(u3)node[left]{$u_{4}$}--(u4)node[left]{$u_{3}$};
\draw (u5)node[below]{$u_{2}$}--(u6)node[below]{$u_{1}$}--(u7)node[below]{$u$};
\draw (u8)node[above]{$v$};
\end{scope}
\begin{scope}[shift={(0.925,0)}]
\8cycle
\coordinate (H) at ($(v1)!1!60:(v8)$);
\draw (v8) -- (H) -- (v1);
\solidnode{H};
\coordinate (H1) at ($(v7)!1!90:(v6)$);
\coordinate (H2) at ($(v6)!1!-90:(v7)$);
\draw (v6) -- (H2) -- (H1) -- (v7);
\solidnode{H1};
\solidnode{H2};
\draw (v1)node[above]{$v_{2}$}--(v2)node[above]{$v_{1}$};
\draw (v5)node[below]{$v_{6}$}--(v6)node[below]{$v_{5}$}--(v7)node[right]{$v_{4}$}--(v8)node[right]{$v_{3}$};
\draw (H1)node[right]{$w_{2}$};
\draw (H2)node[below]{$w_{3}$};
\draw (H)node[above]{$w_{1}$};
\end{scope}
\draw (u6) -- (v5);
\draw (v3) -- ($(u1)-(u8)+(v2)-(v3)$);
\draw (u4) arc (135:315:0.383);
\draw (u4) -- ($1.3*(u4)$);
\recsolidnode{v3};
\recsolidnode{u4};
\end{tikzpicture}}

\caption{A set of unavoidable configurations, where a semicircle represents a path of length two or three with interior vertices having degree three in $G$, a solid point represents a vertex of degree three, and a green square point represents a vertex of degree four in $G$.}
\label{UR}
\end{figure}
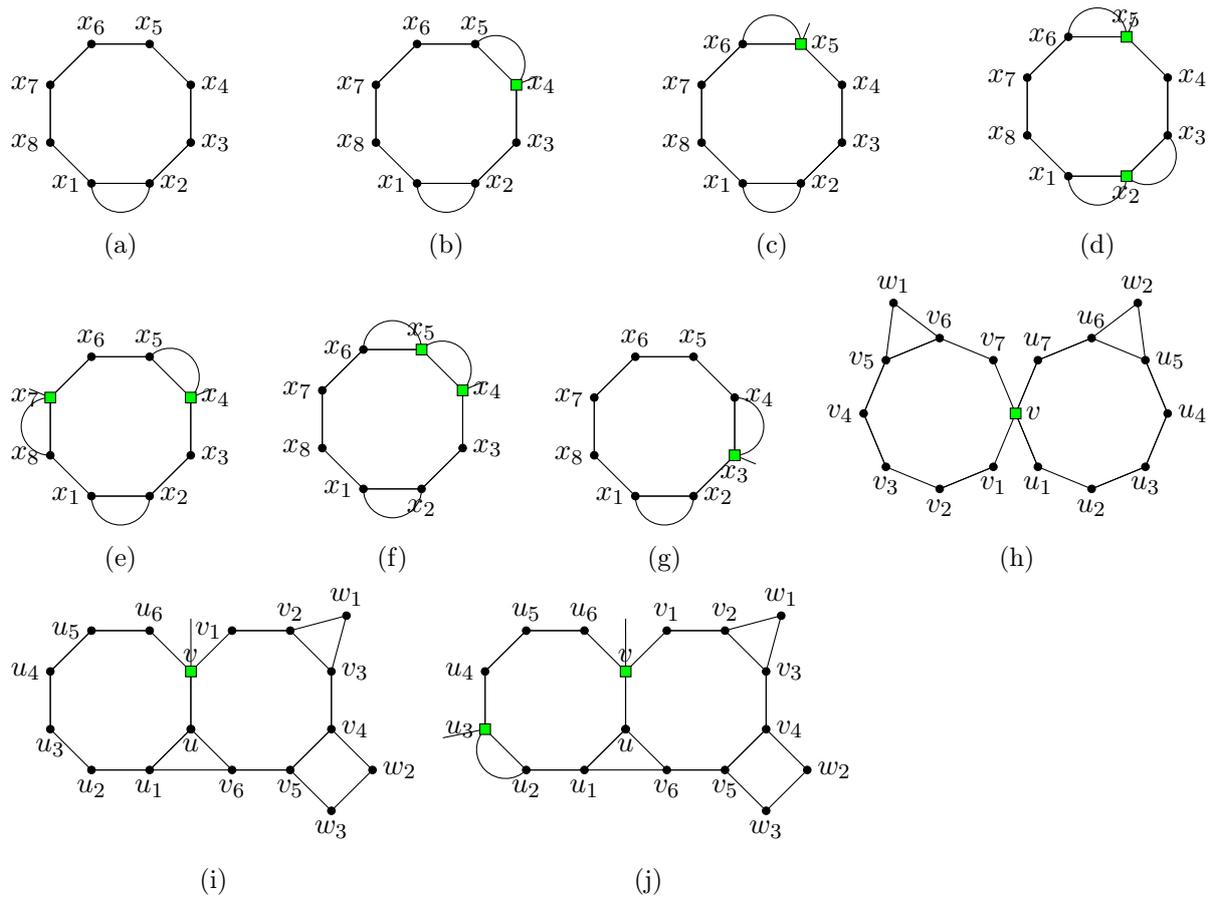

\section{Proof of \cref{Weak}}
\label{sec:3}
In this section, we establish the validity of \cref{Weak}. A graph is a \emph{GDP-tree} if it is connected and each block is either a complete graph or a cycle. Bernshteyn and Lee \cite{MR4606413} demonstrated the following conclusion.

\begin{theorem}\label{wd:minimal}
Assume $G$ is a graph satisfying $\textsf{wd}(G) = d \geq 3$ and $\textsf{wd}(G^{*}) < d$ for each proper subgraph $G^{*}$ of $G$. Then the following statements are true:
\begin{enumerate}[label=(\roman*)]
\item Every vertex has degree at least $d$.
\item For any subset $X \subseteq \{x \in V(G): d_{G}(x)=d\}$, each connected component of $G[X]$ forms a GDP-tree.
\end{enumerate}
\end{theorem}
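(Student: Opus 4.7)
The plan is to prove each conclusion by contradiction: in each case I construct a legal Delete/DeleteSave sequence reducing $(G,d-1)$ to the null graph, which contradicts $\textsf{wd}(G)=d$. The minimality of $G$ is the engine of the argument: every proper subgraph of $G$ admits such a reduction by assumption.

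For part~(i), suppose $v\in V(G)$ has $d_G(v)\leq d-1$, and fix a legal reduction sequence $S$ witnessing $\textsf{wd}(G-v)\leq d-1$. My plan is to replay $S$ verbatim inside $(G,d-1)$ and finish with a single $\textsf{Delete}_v$. Since $S$ never operates on $v$, the legality conditions verified inside $G-v$ transfer to $G$ with only one extra obligation: $v$'s own charge must stay non-negative. That charge begins at $d-1$ and drops by exactly $1$ whenever a neighbor of $v$ is deleted (note $v$ itself is never a ``saved'' vertex of any DeleteSave in $S$). Since $v$ has at most $d-1$ neighbors, it endures at most $d-1$ decrements, so it never becomes negative and the concluding $\textsf{Delete}_v$ is legal.

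For part~(ii), suppose some component $C$ of $G[X]$ contains a block $B$ that is neither a clique nor a cycle. I would first apply minimality to the proper subgraph $G-V(B)$ and replay its reduction inside $G$: by the bookkeeping of part~(i) this is legal, and since $d_G(x)=d$ and $d_B(x)\geq 2$ for every $x\in V(B)$, at most $d-2$ of $x$'s neighbors lie outside $V(B)$, so each $x\in V(B)$ retains residual charge at least $1$. The task therefore reduces to showing that $(B,f')$, with $f'\geq 1$ on $V(B)$, admits a legal reduction. Here I would invoke the classical structural fact that a $2$-connected graph which is neither a clique nor a cycle contains two non-adjacent vertices $a,c$ sharing a common neighbor $b$; with such a triple fixed, the plan is to launch with a single \textsf{DeleteSave} involving this configuration to manufacture one unit of extra slack at $b$, and then peel off the remaining vertices of $B$ in a carefully chosen ear-decomposition order so that every subsequent removal is a plain \textsf{Delete}. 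The main obstacle is verifying legality at each step, in particular checking that the strict inequality required by \textsf{DeleteSave} holds at the moment it is used; this is precisely where the non-clique/non-cycle hypothesis on $B$ is indispensable, since in cliques and cycles the required slack cannot be generated, which is the reason GDP-trees arise as the exact obstructions in part~(ii).
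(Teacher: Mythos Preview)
The paper does not supply a proof of this theorem at all; it simply attributes the result to Bernshteyn and Lee and cites their paper. So there is no ``paper's proof'' to compare against, and your write-up would have to stand on its own.

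Your argument for part~(i) is correct: replaying a legal sequence for $(G-v,d-1)$ inside $(G,d-1)$ is legal because every legality check on vertices of $G-v$ is unchanged, and the only additional obligation---that $v$'s charge stay non-negative---is met since $v$ has at most $d-1$ neighbours and is never the saved vertex of a \textsf{DeleteSave} appearing in the replayed sequence.

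Part~(ii), however, has a genuine gap. Your reduction to the bad block $B$ is fine: after replaying a legal sequence for $G-V(B)$, each $x\in V(B)$ carries residual charge exactly $d_B(x)-1$. The problem is the step you flag but do not resolve: launching with \textsf{DeleteSave}$(a,b)$ requires $f'(a)>f'(b)$, i.e.\ $d_B(a)>d_B(b)$, and the existence of non-adjacent $a,c$ with a common neighbour $b$ does not supply this inequality. In fact your plan fails outright when $B$ is regular. Take $B=K_{3,3}$: it is $2$-connected, not a clique, not a cycle, and every vertex has $f'=2$, so no \textsf{DeleteSave} is legal as a first move. (One can still reduce $K_{3,3}$, but only by performing a plain \textsf{Delete} first to break the symmetry and then a \textsf{DeleteSave}; organising this in general, while keeping later steps legal and maintaining connectivity of what remains, is exactly the non-trivial content of the Bernshteyn--Lee argument.) So the assertion ``a single \textsf{DeleteSave} then greedy deletion'' is not a strategy that can be made to work uniformly, and your sketch does not indicate how to repair it.
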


Now, we can easily prove \cref{Weak}.
\begin{proof}[Proof of \cref{Weak}]
Suppose that $G$ is a counterexample to \cref{Weak} with minimum number of $|V(G)| + |E(G)|$. Thus, $G$ is connected. Moreover, $\textsf{wd}(G) = 3$ and $\textsf{wd}(G^{*}) < 3$ for every proper subgraph $G^{*}$ of $G$. By \cref{STR}, $G$ has a vertex of degree at most two, or it contains a subgraph isomorphic to \cref{UC}. According to \cref{wd:minimal}, a $2^{-}$-vertex is reducible, hence there exists a subgraph isomorphic to \cref{UC}. Let $f(v) = 2$ for each vertex $v \in V(G)$. 

We use $G_{1}$ to denote the graph induced by all vertices in \cref{Ca}. As $G_{1}$ contains the depicted graph as a spanning subgraph, $G_{1}$ has exactly one block that is neither a cycle nor a complete graph. From \cref{wd:minimal}, \cref{Ca} cannot be a subgraph of $G$, which leads to a contradiction.

Let $G_{t}$, $t \in \{2,\dots, 7\}$, represent the subgraph induced by vertices depicted in \cref{Cb,Cc,Cd,Ce,Cf,Cg}. The vertices of $G_{t}$ are labeled as illustrated in \cref{4b,4c,4d,4e,4f,4g}. For each $i \in \{1, \dots, 7\}$, if $x_{i}x_{i+1}$ controls a $4^{-}$-face, then $x_{i}x_{i+1}$ controls $x_{i}u_{i}v_{i}x_{i+1}$ or $x_{i}v_{i}x_{i+1}$. By the minimality, we remove all vertices from $G-V(G_{t})$ by a series of legal operations. Given an ordering: \[x_{2}, x_{3}, \dots, x_{8}, x_{1}, (u_{1}), v_{1},\] if $u_{i}, v_{i}$ ($2 \leq i \leq 7$) exist in $G_{t}$, we insert the vertices $u_{i}, v_{i}$ into the list order following $x_{i}$. Let $g_{t}$ be the resulting function on $G_{t}$. Since $g_{t}(x_{2}) > g_{t}(v_{1})$, we legally remove the vertices from $G_{t}$ by a series of \textsf{Delete} operations except the operation $\textsf{DeleteSave}_{(x_{2},v_{1})}$. Hence, $G$ is weakly $2$-degenerate, which contradicts our assumption. 

Let $G_{8}$ denote the graph induced by vertices depicted in \cref{C-special}. The vertices in $G_{8}$ are labeled as illustrated in \cref{4h}. We order the vertices of $G_{8}$ as: \[v_{6}, v_{7}, u_{6}, u_{7}, v, u_{1}, u_{2}, \dots, u_{5}, w_{2}, v_{1}, v_{2}, \dots, v_{5}, w_{1}.\] By minimality, starting from $(G, f)$, we remove all vertices from $G - V(G_{8})$ by a series of legal operations. Let $f'$ be the resulting function on $G_{8}$. Note that $f'(v_{6}) > f'(w_{1})$ and $f'(u_{6}) > f'(w_{2})$. Beginning from $(G_{8}, f')$, following the given order, we can delete all vertices from $G_{8}$, except for the initial operation $\textsf{DeleteSave}_{(v_{6},w_{1})}$ and the subsequent operation $\textsf{DeleteSave}_{(u_{6},w_{2})}$. Consequently, $G$ is weakly $2$-degenerate, which contradicts our assumption.

Let $G_{9}$ (resp. $G_{10}$) denote the graph induced by the vertices depicted in \cref{C-f3f4a} (resp. \cref{C-f3f4b}). The vertices of $G_{9}$ and $G_{10}$ are as depicted in \cref{4i,4j}. The edge $u_{2}u_{3}$ in \cref{4j} controls a face $u_{2}w_{4}w_{5}u_{3}$ or $u_{2}w_{5}u_{3}$. We order all vertices $S$ of $G_{9}$ and $G_{10}$ as: \[v_{2}, v_{1}, v, u_{6}, u_{5}, u_{4}, u_{3}, (w_{5}), (w_{4}), u_{2}, u_{1}, u, v_{6}, v_{5}, w_{3}, w_{2}, v_{4}, v_{3}, w_{1}.\] Similarly, we remove all vertices from $G-S$ by a sequence of legal operations. Let $f'$ be the resulting function on $S$. Notably, $f'(v_{2}) > f'(w_{1})$, thus we can further legally delete all vertices from $S$ except for the operation $\textsf{DeleteSave}_{(v_{2},w_{1})}$. Consequently, $G$ is weakly $2$-degenerate, which contradicts our assumption.
\end{proof}

\section{Proof of \cref{IF:partition}}
\label{sec:4}
In this section, we establish \cref{IF:partition} utilizing a concept termed strictly $f$-degenerate transversals.

Let $G$ be a simple graph, and for each $v \in V(G)$, let $L_{v} = \{(v, 1), (v, 2), \dots, (v, s)\}$. For each edge $ab \in E(G)$, let $\mathscr{M}_{ab}$ be a matching between $L_{a}$ and $L_{b}$. A \emph{cover} of $G$ is represented by a graph $H$ with the following conditions:
\begin{enumerate}[label = (\roman*)]
\item the vertex set of $H$ is the union of $L_{v}$ for all $v \in V(G)$; and
\item the edge set $\mathscr{M}$ of $H$ is the union of all $\mathscr{M}_{ab}$ whenever $ab$ is an edge of $G$.
\end{enumerate}

Let $H$ be a cover of $G$, and let $f \in \mathbb{Z}^{H}$, forming a \emph{valued cover} $(H, f)$ of $G$. For any subset $S$ of $V(G)$, the induced subgraph $H[\bigcup_{v \in S} L_{v}]$ is simply written as $H_{S}$. A subset $T \subseteq V(H)$ is a \emph{transversal} of $H$ if each $L_{v}$ contains exactly one vertex from $T$. Furthermore, if $H[T]$ is strictly $f$-degenerate, then $T$ is called a \emph{strictly $f$-degenerate transversal} (SfDT for short).

We establish a more robust result as follows:
\begin{theorem}\label{ww}
Let $G$ be a plane graph with $\mathrm{dist}^{\Delta} \geq 2$ and without $5$-, $6$-, $7$-cycles. Assume $H$ is a cover and $f \in \{0, 1, 2\}^{H}$. If each vertex $v \in V(G)$ satisfies $f(v, 1) + \dots + f(v, s) \geq 3$, then $H$ contains a SfDT.
\end{theorem}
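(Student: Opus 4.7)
I would prove \cref{ww} by a minimum counterexample argument, translating each $\textsf{Delete}/\textsf{DeleteSave}$ step in the proof of \cref{Weak} into the cover-transversal setting. Viewing an SfDT as a choice $T = \{(v, j_v) \in L_v : v \in V(G)\}$ built one vertex at a time along some linear order of $V(G)$, the admissibility requirement is that at each $v$, $f(v, j_v)$ exceeds the number of matched previously-picked elements at $(v, j_v)$. The sum condition $\sum_{j=1}^{s} f(v, j) \ge 3$ then supplies the SfDT analog of $\textsf{Delete}_v$: if at most $k$ of $v$'s $G$-neighbors have already been processed, then $\sum_j \#(\text{matched earlier picks at }(v, j)) \le k$, since each such neighbor contributes to at most one index of $L_v$ via $\mathscr{M}$; hence an admissible $j_v$ exists whenever $k \le 2$, because $\sum_j f(v, j) \ge 3 > k$. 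The analog of $\textsf{DeleteSave}_{(a, b)}$ is the following save move: process $b$ before $a$ and pick any $(b, j_b)$, which blocks a single forbidden index $j^* \in \{1, \dots, s\}$ at $a$ (the $L_a$-partner of $(b, j_b)$ under $\mathscr{M}_{ab}$); the later processing of $a$ then demands some $j \ne j^*$ with $f(a, j) > $ (matched earlier picks at $(a, j)$, now excluding the contribution from $b$).

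Taking $(G, H, f)$ to be a counterexample minimizing $|V(G)| + |E(G)|$, minimality combined with the averaging principle above immediately forces $\delta(G) \ge 3$, and \cref{STR} then exhibits one of the configurations of \cref{UC} inside $G$. For each such configuration $G_t$, I would mirror the reduction of Section \ref{sec:3}: apply minimality to the restricted cover $(H_{V(G) \setminus V(G_t)}, f)$ to get an SfDT $T'$, and extend $T'$ by processing the vertices of $V(G_t)$ in the order dictated by the $\textsf{Delete}/\textsf{DeleteSave}$ sequence used for $G_t$ there, replacing each plain $\textsf{Delete}$ step with the averaging move and each $\textsf{DeleteSave}$ with the save move. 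For \cref{Cb,Cc,Cd,Ce,Cf,Cg} this uses one save at $(x_2, v_1)$; for \cref{C-special,C-f3f4a,C-f3f4b}, two save moves are applied in sequence at the pairs listed in the proof of \cref{Weak}. Configuration \cref{Ca}, for which the weak-degeneracy proof appealed directly to \cref{wd:minimal}, calls for an SfDT analog of that lemma: the induced subgraph on $V(G_t)$ is a single $2$-connected block that is neither a cycle nor a complete graph, and in a minimum SfDT-counterexample such a block cannot be realized by a set of degree-$3$ vertices.

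The main obstacle is verifying the averaging at each save step. Since $f \in \{0, 1, 2\}^{H}$, dropping the forbidden index $j^*$ from $\sum_j f(a, j) \ge 3$ leaves only $\sum_{j \ne j^*} f(a, j) \ge 1$, which must dominate the residual count of matched earlier picks at the non-forbidden indices. Showing that this residual count is indeed at most $1$ at each save step for each configuration is precisely the SfDT shadow of the weak-degeneracy legality check $g_t(a) > g_t(b)$ already carried out in Section \ref{sec:3}, and the necessary local bookkeeping is recorded in \crefrange{4a}{4j}. A subsidiary technicality is the formulation of the SfDT analog of \cref{wd:minimal} needed for \cref{Ca}; this should follow the same Bernshteyn--Lee style argument that establishes \cref{wd:minimal} itself, now applied to $(H, f)$ rather than to $(G, f)$.
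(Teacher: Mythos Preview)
Your high-level plan—minimal counterexample, invoke \cref{STR}, rule out $2^{-}$-vertices, then reduce each configuration of \cref{UC} via an ordering—matches the paper exactly. The paper, however, does not translate \textsf{Delete}/\textsf{DeleteSave} by hand: it cites \cref{SFDT:minimal} (Lu \etal), which simultaneously gives $\delta(G)\ge 3$ and the $2$-connected-block obstruction you need for \cref{Ca}, and it cites \cref{WWY} (Wang \etal) as a black box for the remaining configurations, feeding it precisely the orderings used in Section~\ref{sec:3}. So your strategy is right; the issue is in how you implement the save step.

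Your ``save move'' as written does not work. Processing $b$ first, picking \emph{any} $(b,j_b)$, and then asking $a$ to avoid the single forbidden index $j^{*}$ buys nothing: you would need $\sum_{j\ne j^{*}} f(a,j)$ to strictly exceed the number of $a$'s remaining earlier neighbours whose picks land at indices $\ne j^{*}$, but $\sum_{j\ne j^{*}} f(a,j)$ is only guaranteed $\ge 1$ (since $f(a,j^{*})\le 2$) while that residual count can be $2$—nothing forces $a$'s other earlier neighbours to hit $j^{*}$. The weak-degeneracy check $g_t(x_{2})>g_t(v_{1})$ does not ``shadow'' to this bound. What actually carries the reduction (and what \cref{WWY} packages) is that the \emph{first} vertex $u_{1}$ in the ordering—the endpoint with strictly larger slack $k-(d_G(u_{1})-d_K(u_{1}))$—has at least two admissible indices, because $\sum_j f(u_{1},j)\ge 3$, $f\le 2$, and $u_{1}$ sees no earlier $K$-vertices; these map under $\mathscr{M}_{u_{1}u_{m}}$ to distinct (or absent) targets at $u_{m}$, so one \emph{chooses} $j_{u_{1}}$ so that $u_{m}$ remains colourable at the end. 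The clever choice sits at the high-slack endpoint, not an arbitrary choice at the low-slack one. (A minor side point: only \cref{C-special} uses two save pairs; \cref{C-f3f4a,C-f3f4b} need just one, at $(v_{2},w_{1})$.)
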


To establish \cref{ww}, we require some preliminary results. For a graph $G$ and a valued cover $(H, f)$, it is \emph{minimally non-strictly $f$-degenerate} if $H$ has no SfDT, yet $H - L_{v}$ admits a SfDT for any $v \in V(G)$. 

Define $\mathscr{D}$ as the set of vertices $v \in V(G)$ satisfying $f(v, 1) + \dots + f(v, s) \geq d_{G}(v)$. Lu \etal \cite{MR4357325} presented the next result on critical graphs. 
\begin{theorem}[Lu \etal \cite{MR4357325}]\label{SFDT:minimal}
Let $G$ be a graph and $(H, f)$ be a valued cover of $G$. Let $D'$ be a nonempty subset of $\mathscr{D}$ with $G[D']$ having no cut vertices. If $(H, f)$ is a minimally non-strictly $f$-degenerate pair, then
\begin{enumerate}[label = (\roman*)]
\item\label{M1} $G$ is connected and $f(v, 1) + f(v, 2) + \dots + f(v, s) \leq d_{G}(v)$ for each $v \in V(G)$, and 
\item\label{M2} $G[D']$ is a cycle, a complete graph, or $d_{G[D']}(v) \leq \max_{q} \{f(v, q)\}$ for each $v \in D'$. \qed
\end{enumerate}
\end{theorem}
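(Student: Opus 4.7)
The plan is to prove each clause by contradiction with the minimality hypothesis: assuming the stated conclusion fails, I will exhibit a SfDT of $H$, contradicting the assumption that $H$ itself has none.

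For part (i), connectedness follows from the observation that a SfDT on a disjoint union of covers decomposes into a SfDT on each piece: if $G$ is disconnected and, say, $H_{V(G_{1})}$ has no SfDT for some component $G_{1}$, then minimality applied to any vertex $v$ in a different component yields a SfDT of $H-L_{v}$ whose restriction to $L_{V(G_{1})}$ is a SfDT of $H_{V(G_{1})}$, a contradiction. For the sum bound, suppose $\sum_{q}f(v,q)>d_{G}(v)$ for some $v$; by minimality $H-L_{v}$ has a SfDT $T'$. Write $m(v,q)$ for the number of edges from $(v,q)$ to $T'$ in $H$. Each neighbor $u\in N_{G}(v)$ contributes at most one such edge via the matching $\mathscr{M}_{vu}$, so $\sum_{q}m(v,q)\le d_{G}(v)<\sum_{q}f(v,q)$. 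Pigeonhole yields some $q^{\ast}$ with $m(v,q^{\ast})<f(v,q^{\ast})$, and then $T'\cup\{(v,q^{\ast})\}$ is a SfDT of $H$: any induced subgraph containing $(v,q^{\ast})$ has $(v,q^{\ast})$ as a vertex of degree strictly less than $f(v,q^{\ast})$, and subgraphs avoiding $(v,q^{\ast})$ are already strictly $f$-degenerate.

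For part (ii), combining $D'\subseteq\mathscr{D}$ with part (i) yields $\sum_{q}f(v,q)=d_{G}(v)$ for every $v\in D'$. Suppose for contradiction that $G[D']$ is 2-connected but neither a cycle nor complete, and some $v_{0}\in D'$ satisfies $d_{G[D']}(v_{0})>\max_{q}f(v_{0},q)$. The plan is to invoke a classical fact about 2-connected graphs: every 2-connected graph that is neither a cycle nor a complete graph contains two nonadjacent vertices $u,w$ with $G[D']-\{u,w\}$ connected (equivalently, an induced path $u\text{-}v\text{-}w$ with $uw\notin E$ and the appropriate connectivity). Using minimality to peel off a SfDT of the cover with $L_{u}$ and $L_{w}$ removed, I would then extend by a joint selection $(u,q_{u}),(w,q_{w})$: the nonadjacency of $u,w$ decouples the two choices (no matching $\mathscr{M}_{uw}$ to worry about), and the strict slack $d_{G[D']}(v_{0})>\max_{q}f(v_{0},q)$ is propagated along paths of $G[D']-\{u,w\}$ from $v_{0}$ to guarantee that some pair $(q_{u},q_{w})$ yields a SfDT of $H$.

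The main obstacle is part (ii). Every vertex in $D'$ is tight ($\sum_{q}f(v,q)=d_{G}(v)$), so no slack is available locally at any single $L_{v}$; the extension must exploit global structure. I expect the cleanest realization is an induction on $|E(G[D'])|$ via an ear decomposition of the 2-connected graph $G[D']$, with cycles and complete graphs as base cases. The inductive step removes the interior of an ear, obtains a SfDT of the smaller cover by induction or minimality, and re-inserts the ear using a pigeonhole argument fueled by the slack at $v_{0}$. The alternative is a Kempe-chain/alternating-path swap: exchange transversal representatives along a path in $G[D']$ until the tight equality $m(v,q)=f(v,q)$ breaks at some vertex, enabling the pigeonhole step of part (i). Verifying that such swaps preserve the SfDT property of the intermediate $H[T]$ throughout the chain — without introducing a subgraph that violates the degeneracy condition — is the technical crux of the argument.
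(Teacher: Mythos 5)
This theorem is imported from Lu, Wang and Wang \cite{MR4357325}: the paper states it with a closing tombstone and supplies no proof, so there is no internal argument to compare against and your attempt must stand on its own. Part (i) does stand: the component-wise decomposition for connectedness, and the pigeonhole extension $T'\cup\{(v,q^{*})\}$ together with the observation that any subgraph containing $(v,q^{*})$ already exhibits a vertex of degree less than $f$ while any subgraph avoiding it lies in $H[T']$, is the standard argument and is complete.

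Part (ii) is a genuine gap, and not only because you flag the crux yourself. The one concrete plan you write down --- fix a SfDT $T'$ of $H-L_{u}-L_{w}$ by minimality, then append representatives of $u$ and $w$ --- cannot work in that order. By the equality $\sum_{q}f(x,q)=d_{G}(x)$ for $x\in D'$ established in (i), both $u$ and $w$ are tight, and once $T'$ is fixed every $G$-neighbour of $u$ already has its representative chosen, so the pigeonhole argument only yields some $q$ with $m(u,q)\leq f(u,q)$, never the strict inequality needed to append $(u,q)$ safely; the slack at $v_{0}$ is unusable at that stage because $v_{0}$'s representative is already locked inside $T'$. The Brooks/Erd\H{o}s--Rubin--Taylor-style argument runs in the opposite direction: one chooses $(u,q_{u})$ and $(w,q_{w})$ \emph{first}, arranging that their matched partners in $L_{z}$ (for a common neighbour $z$ of $u,w$ with $G-u-w$ connected) collide on a single cover vertex whose $f$-value is too small to absorb the collision, and only then greedily completes the transversal through $G-u-w$ in order of decreasing distance from $z$, finishing at $z$ where the collision provides the saving. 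Making this work for variable $f$ --- a collision at $(z,q^{*})$ helps only when $f(z,q^{*})<m(z,q^{*})$, which is precisely where the hypothesis $d_{G[D']}(v_{0})>\max_{q}f(v_{0},q)$ and a careful choice of $z$ must enter --- and dealing with the greedy step at vertices of $G$ outside $\mathscr{D}$, which are strictly deficient and cannot be processed with only one unprocessed neighbour, is the actual content of the theorem. Neither the ear-decomposition induction nor the alternating-path swap you mention is developed far enough to certify that these obstacles are overcome, so the proposal does not yet constitute a proof of (ii).
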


\begin{theorem}[Wang \etal \cite{Wang2019+}]\label{WWY}
Let $k$ be an integer with $k \geq 3$, and let $K$ be an induced subgraph of $G$ such that the vertices of $K$ can be ordered as $u_{1}, u_{2}, \dots, u_{m}$ satisfying:
\begin{enumerate}[label = (\roman*)]
\item\label{WWY-1} $k - (d_{G}(u_{1}) - d_{K}(u_{1})) > k - (d_{G}(u_{m}) - d_{K}(u_{m}))$; 
\item\label{WWY-2} $d_{G}(u_{m}) \leq k$ and $u_{1}u_{m}$ is an edge in $G$; and
\item\label{WWY-3} for $2 \leq i \leq m - 1$, $u_{i}$ has at most $k - 1$ neighbors in $G - \{u_{i+1}, \dots, u_{m}\}$.
\end{enumerate}
Let $H$ be a cover of $G$ and $f \in \{0, 1, 2\}^{H}$. If $f(v, 1) + \dots + f(v, s) \geq k$ for each vertex $v \in V(G)$, then any SfDT of $H - H_{K}$ can be extended to that of $H$. \qed
\end{theorem}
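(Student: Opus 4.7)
The plan is to extend the given SfDT $T'$ of $H-H_{K}$ to an SfDT of $H$ by choosing, for each $u_{i}\in V(K)$, a vertex $T_{u_{i}}=(u_{i},q_{i})\in L_{u_{i}}$, and appending these after $T'$ in the strictly-$f$-degenerate ordering in the order $u_{1},u_{2},\ldots,u_{m}$. At each insertion step, the relevant condition is $N_{i}(q_{i})<f(u_{i},q_{i})$, where $N_{i}(q_{i})$ counts matched neighbors of $(u_{i},q_{i})$ among the already-placed vertices; since every edge $u_{i}v$ of $G$ contributes at most one matching edge from $L_{u_{i}}$ via $\mathscr{M}_{u_{i}v}$, we get $\sum_{q}N_{i}(q)\le|N_{G}(u_{i})\cap(\text{already placed})|$, so a pigeonhole argument against $\sum_{q}f(u_{i},q)\ge k$ reduces the problem to controlling the back-degree along the ordering.

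For $u_{1}$ (inserted first), the back-degree equals $d_{G}(u_{1})-d_{K}(u_{1})$; since $u_{1}u_{m}\in E(G)$ forces $d_{K}(u_{m})\ge 1$, condition (ii) gives $d_{G}(u_{m})-d_{K}(u_{m})\le k-1$, and the strict inequality in condition (i) then yields $d_{G}(u_{1})-d_{K}(u_{1})\le k-2$, so $\sum_{q}\bigl(f(u_{1},q)-N_{1}(q)\bigr)\ge 2$ and \emph{at least two} distinct vertices of $L_{u_{1}}$ qualify as $T_{u_{1}}$. For each $u_{i}$ with $2\le i\le m-1$ the back-degree is bounded by $|N_{G}(u_{i})\setminus\{u_{i+1},\ldots,u_{m}\}|\le k-1$ via condition (iii), so a valid $T_{u_{i}}$ exists. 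For $u_{m}$ (inserted last), the back-degree is only bounded by $d_{G}(u_{m})\le k$, yielding $\sum_{q}(f(u_{m},q)-N_{m}(q))\ge 0$; if this sum is strictly positive we pick $T_{u_{m}}$ and are done, and otherwise we are in the tight regime $d_{G}(u_{m})=k$ with $N_{m}(q)=f(u_{m},q)$ for every $q$, meaning every neighbor of $u_{m}$ is matched into some slot of $L_{u_{m}}$.

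In this tight case I intend to invoke the reserved flexibility at $u_{1}$: swap $T_{u_{1}}$ for the alternative valid vertex $T'_{u_{1}}$. Since $\mathscr{M}_{u_{1}u_{m}}$ is a matching, $T_{u_{1}}$ and $T'_{u_{1}}$ cannot share a partner in $L_{u_{m}}$, so the match contribution of $T_{u_{1}}$ to its old partner $(u_{m},q^{*})$ is removed by the swap and $(u_{m},q^{*})$ becomes an available slot for $T_{u_{m}}$. The main obstacle will be controlling the cascade: swapping $T_{u_{1}}$ may invalidate some $T_{u_{j}}$ with $2\le j\le m-1$ and $u_{1}u_{j}\in E(K)$, requiring a re-pick whose new match partner in $L_{u_{m}}$ could re-tighten the freed slot. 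I plan to handle this in one of two ways: either (a) show that for every affected $T_{u_{j}}$ the slack from condition (iii) allows a re-pick avoiding the target slot $(u_{m},q^{*})$, combining local greedy with the matching structure; or (b) recast the entire extension as finding an SfDT of the residual valued cover $(H_{K},f^{*})$ with $f^{*}(u_{i},q)=f(u_{i},q)-\#\{v\in V(G)\setminus V(K):\,v\in N_{G}(u_{i}),\,(u_{i},q)\sim T_{v}\}$, where $\sum_{q}f^{*}(u_{i},q)\ge 1$ always and $\ge 2$ at $u_{1}$, and apply \cref{SFDT:minimal} to a hypothetical minimal non-SfDT sub-cover $(H_{K'},f^{*})$ so that the degree bound $\sum_{q}f^{*}(v,q)\le d_{G[K']}(v)$ combined with conditions (i)--(iii) and the edge $u_{1}u_{m}$ rules out the cycle, complete, and low-degree alternatives forced by the structural part of that theorem.
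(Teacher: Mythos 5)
First, a point of reference: the paper does not prove \cref{WWY} at all --- it is quoted from \cite{Wang2019+} with the proof deferred to that reference --- so your attempt can only be judged on its own terms. Your skeleton (append $T_{u_{1}},\dots,T_{u_{m}}$ after $T'$ in the degeneracy ordering, bound back-degrees via \ref{WWY-1}--\ref{WWY-3}, pigeonhole over the slots of each $L_{u_{i}}$) is the standard and correct one, and your treatment of $u_{2},\dots,u_{m-1}$ is fine. The gaps are concentrated exactly where you anticipate trouble, at $u_{m}$, and neither of your proposed resolutions closes them.

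The first concrete error: from $\sum_{q}\bigl(f(u_{1},q)-N_{1}(q)\bigr)\ge 2$ you conclude that at least two distinct vertices of $L_{u_{1}}$ qualify. Since $f\in\{0,1,2\}^{H}$, the slack can be concentrated in a single slot ($f(u_{1},q)=2$, $N_{1}(q)=0$, and $N_{1}(q')\ge f(u_{1},q')$ for every $q'\ne q$), in which case exactly one vertex of $L_{u_{1}}$ qualifies and there is no ``alternative valid vertex'' to swap to; the rescue of the tight case then collapses at its first step. Second, even when two qualifying vertices exist, option (a) does not control the cascade: condition \ref{WWY-3} gives each $u_{j}$ with $2\le j\le m-1$ total slack exactly $1$ in the worst case, so an invalidated $T_{u_{j}}$ may have a \emph{unique} re-pick, leaving no freedom to ``avoid the target slot''; moreover that re-pick can invalidate further $T_{u_{j'}}$ with $j'>j$, and nothing bounds the length of the chain or its net effect on $L_{u_{m}}$. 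Option (b) also fails to terminate in a contradiction: for the residual cover one only gets $\sum_{q}f^{*}(u_{m},q)\ge d_{K}(u_{m})$ with a weak inequality, so a minimal non-SfDT sub-cover $K'$ whose highest-indexed vertex is $u_{m}$ survives part (i) of \cref{SFDT:minimal}, and part (ii) applied to $D'=\{u_{1},u_{m}\}$ is vacuous because $K_{2}$ is a complete graph. A correct argument must make the choice at $u_{1}$ (and at the intermediate vertices adjacent to $u_{m}$) \emph{before} or \emph{jointly with} the greedy pass in a way that provably reserves a free slot of $L_{u_{m}}$, treating separately the ``two valid slots'' and ``one slot with slack two'' cases; as written, the tight case remains open.
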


Now, we prove \cref{ww}.

\begin{proof}[Proof of \cref{ww}]
Assume that $G$ is a counterexample to \cref{ww} such that $(H,f)$ contains no SfDT, yet $(H - L_{v}, f)$ has a SfDT for any $v \in V(G)$. According to \cref{STR}, $G$ must contain a $2^{-}$-vertex, or a subgraph isomorphic to \cref{UC}. By \cref{SFDT:minimal}, every vertex has degree at least three, thereby indicating the presence of a subgraph isomorphic to \cref{UC} in $G$.

In the following, we still define $G_{1}, G_{2}, \dots, G_{10}$ to be the graph induced by the vertices in \cref{Ca,Cb,Cc,Cd,Ce,Cf,Cg,C-special,C-f3f4a,C-f3f4b} respectively, as that of in the proof of \cref{Weak}, and the vertices of $G_{t}$ are labeled as in \cref{4a,4b,4c,4d,4e,4f,4g,4h,4i,4j}.

Note that $G_{1}$ is a $2$-connected graph that is neither a cycle nor a complete graph. Furthermore, within this subgraph, there exists a vertex $v$ with degree greater than two. This contradicts \cref{SFDT:minimal}.

Next we consider $G_{t}$ for $t \in \{2, \dots, 7\}$. For each $i \in \{1, \dots, 7\}$, if $x_{i}x_{i+1}$ controls a $4^{-}$-face, then $x_{i}x_{i+1}$ controls $x_{i}u_{i}v_{i}x_{i+1}$ or $x_{i}v_{i}x_{i+1}$. Given an ordering: \[x_{2},x_{3},\dots, x_{8},x_{1},(u_{1}),v_{1},\] if $u_{i}, v_{i}$ ($2 \leq i \leq 7$) exist in $G_{t}$, we insert the vertices of $u_{i}, v_{i}$ to the list order following $x_{i}$. Denoting the resulting list as $S$. It is observed that $S$ satisfies the conditions in \cref{WWY} with $k = 3$ and pair $(x_{2}, v_{1})$. By minimality, $(H-H_{S},f)$ has a SfDT $T$. According to \cref{WWY}, we can obtain a SfDT of $H$ by extending $T$, leading to a contradiction.

When we consider $G_{8}$, we order the vertices of $G_{8}$ as follows: \[v_{6},v_{7},u_{6},u_{7},v,u_{1},u_{2},\dots,u_{5},w_{2},v_{1},v_{2},\dots,v_{5},w_{1}.\] It is evident that this ordering satisfies the conditions in \cref{WWY} with $k = 3$ and the pairs $(v_{6}, w_{1})$ and $(u_{6}, w_{2})$. A similar contradiction is derived as above cases. 
 
Now we consider $G_{9}$ and $G_{10}$. Note that the edge $u_{2}u_{3}$ in \cref{4j} controls a face $u_{2}w_{4}w_{5}u_{3}$ or $u_{2}w_{5}u_{3}$. We order all vertices $S$ of $G_{9}$ and $G_{10}$ as follows: \[v_{2}, v_{1}, v, u_{6}, u_{5}, u_{4}, u_{3}, (w_{5}), (w_{4}), u_{2}, u_{1}, u, v_{6}, v_{5}, w_{3}, w_{2},v_{4},v_{3},w_{1}.\] It is notable that this list complies with the conditions specified in \cref{WWY} with $k = 3$ and the pair $(v_{2}, w_{1})$. A similar contradiction is derived as above cases. 
\end{proof}

For a cover $H$ of $G$, we define $H$ as a \emph{good cover} of $G$ if it has a property that $(u, i)$ and $(v, j)$ are adjacent iff $i = j$ and $uv \in E(G)$. Observe that a good cover of $G$ is isomorphic to $s$ copies of $G$.

\begin{proof}[Proof of \cref{IF:partition}]
Let $s = 2$, and consider a good cover $H$ of $G$. Let $f$ be a mapping with $f(v, 1) = 1$ and $f(v, 2) = 2$ for every vertex $v \in V(G)$. According to \cref{ww}, $H$ has a SfDT $T$. Let $\mathcal{I}=\{v: (v, 1) \in T\}$ and $\mathcal{F}=\{v: (v, 2) \in T\}$. It is observed that $\mathcal{I}$ is an independent set in $G$, and $\mathcal{F}$ induces a forest in $G$. Consequently, $G$ is \emph{$(\mathcal{I}, \mathcal{F})$}-partitionable. This establishes \cref{IF:partition}.
\end{proof}

\vskip 0mm \vspace{0.3cm} \noindent{\bf Acknowledgments.} We thank the two anonymous referees for their valuable comments and constructive suggestions on the manuscript. The first author was supported by the Natural Science Foundation of Henan Province (No. 242300420238). The third author was supported by National Natural Science Foundation of China (No. 12101187).


\end{document}